\documentclass[11pt]{article}
\usepackage[final]{epsfig}
\usepackage[left=3cm,right=3cm, top=2.5cm,bottom=2.5cm,bindingoffset=0cm]{geometry}
\usepackage{graphics}
\usepackage{amsmath}
\usepackage{amsfonts}
\usepackage{latexsym}
\usepackage{amssymb, mathabx}
\usepackage{amsthm}
\usepackage{graphicx}
\usepackage{epstopdf}
\usepackage{multicol,multirow}
\usepackage{hyperref, enumitem}
\usepackage{mathtools}

\usepackage{tabularx}
\usepackage{array}

\usepackage{subcaption}

\usepackage{cancel}

\usepackage{url}
\usepackage{epstopdf}
\usepackage{wasysym}
\usepackage[OT2, T1]{fontenc}
\usepackage[russian, english]{babel}

\usepackage{epigraph}

 \usepackage{relsize}

\usepackage[nottoc]{tocbibind}

\usepackage{amsthm}



\usepackage{tikz}
\usetikzlibrary{
  arrows.meta,           
  positioning,           
  decorations.pathmorphing, 
  decorations.markings,  
  decorations.text       
}

\tikzset{>=Stealth}

\usetikzlibrary{shapes.geometric}

\tikzset{
  pil/.style={
    ->,
    thick,
    shorten <=2pt,
    shorten >=2pt,
  }
}

\tikzset{
  ->-/.style={
    decoration={markings, mark=at position 0.7 with {\arrow{>}}},
    postaction={decorate}
  }
}

\tikzset{
  -<-/.style={
    decoration={markings, mark=at position 0.4 with {\arrow{<}}},
    postaction={decorate}
  }
}

\tikzset{
  anglearrow/.style={
    decoration={markings, mark=at position 0.52 with {\arrow{angle 90}}},
    postaction={decorate}
  }
}

\usepackage{pgfplots}
\pgfplotsset{compat=1.18}

\makeatletter
\def\thmhead@plain#1#2#3{%
  \thmname{#1}\thmnumber{\@ifnotempty{#1}{ }\@upn{#2}}%
  \thmnote{ {\the\thm@notefont#3}}}
\let\thmhead\thmhead@plain

\makeatother

\newcounter{AppCounter}

\textwidth 163mm
\textheight 230mm

\def\restrict#1{\raise-.5ex\hbox{\ensuremath|}_{#1}}

\newtheorem{lemma}{Lemma}[section]
\newtheorem{proposition}[lemma]{Proposition}

\newtheorem{remark-definition}[lemma]{Remark-Definition}
\newtheorem{theorem}[lemma]{Theorem}

\newtheorem{proposition-conjecture}[lemma]{Proposition-conjecture}

\theoremstyle{definition}

\newtheorem{definition}[lemma]{Definition}
\newtheorem{remark}[lemma]{Remark}

\newcommand{\grad}[1]{\nabla #1}

\newcommand{\id}{\mathrm{id}}

\newcommand{\low}[1]{\raise-.0ex\hbox{$\scriptstyle #1$}}
\newcommand{\high}[1]{\raise.5ex\hbox{$\scriptstyle #1$}}




%
%

%


\newcommand{\marginnote}[1]
{
}

\newcounter{ai}

\newcounter{bk}

\sloppy

\title {Infinite-dimensional  nonholonomic and vakonomic systems}
\date{~}

\author{Alexander G. Abanov\thanks{
    Department of Physics and Astronomy, Stony Brook University;
    e-mail:  \texttt{alexandre.abanov@stonybrook.edu}}~
and Boris Khesin\thanks{
    Department of Mathematics,
    University of Toronto, Canada;
    e-mail: \texttt{khesin@math.toronto.edu}}
}




\begin{document}
\maketitle

\vspace{-1cm}

\epigraph{You know that in moments of stress\\
You tend to get tenser, not less;\\
But since stress is a tensor\\
You needn't feel denser;\\
It's tricky, I have to confess.}{H.K.~Moffatt, FRS}

\begin{abstract}
In this paper, we present a collection of infinite-dimensional systems with nonholonomic constraints. 
In finite dimensions the two essentially different types of dynamics, nonholonomic or vakonomic ones, are known to be obtained by taking certain limits of holonomic systems with Rayleigh dissipation, as in \cite{kozlov1983realization}.
We visualize this phenomenon  for the classical example of a skate on an inclined plane.

The infinite-dimensional examples of nonholonomic and vakonomic systems revisited in the paper
include subriemannian and Euler–Poincaré–Suslov systems on  Lie groups, the Heisenberg chain, the general Camassa–Holm equation, infinite-dimensional geometry of a nonholonomic Moser theorem, subriemannian approximations of an ideal
hydrodynamics, parity-breaking nonholonomic fluids, and potential solutions to Burgers-type equations arising in optimal mass transport.
Finally, we return to a higher-dimensional analogue of the skate, the kinematics of a car with $n$ trailers, as well as its limit as
$n\to \infty$. We show that its infinite-dimensional version  is a snake-like motion of the 
Chaplygin sleigh with a string, and it is subordinated to  an infinite-dimensional Goursat distribution.

\end{abstract}

\tableofcontents



\section{Introduction}

The theory of infinite-dimensional Hamiltonian systems is by now  well developed, with plenty of examples and several possible frameworks for infinite-dimensional Poisson brackets. The corresponding evolution PDEs include the Korteweg--de Vries, Nonlinear Schr\"odinger, Camassa--Holm, Kadomtsev--Petviashvili and many other equations, while examples of infinite-dimensional symplectic and Poisson structures include those of Marsden--Weinstein ones on the space of knots and membranes, Gelfand--Dickey brackets on pseudo-differential symbols, Lie-Poisson brackets on the duals of infinite-dimensional Lie algebras, etc. 
On the other hand, infinite-dimensional contact (or more generally, nonholonomic) systems are very rare, despite the fact that in finite dimensions contact geometry is regarded as the twin sister of symplectic geometry. 
General equations of nonholonomic dynamics in an infinite-dimensional setting were described in  \cite{Shi2017, Shi2020}. 
Here, we collect several examples of infinite-dimensional nonholonomic and vakonomic systems that, in our opinion, are suggestive for the future development of infinite-dimensional nonholonomic mechanics.

The goal of this note is mostly expository. We start by describing two different types of nonholonomic dynamics, vakonomic 
or the nonholonomic one governed by the Lagrange-d'Alembert principle. They are known to be obtained by taking certain limits of holonomic systems with Rayleigh dissipation, see \cite{kozlov1983realization}. 
Vakonomic mechanics is closely related to subriemannian geometry and control theory, see \cite{montgomery2002tour}. In a nutshell, given a bracket-generating distribution on a manifold and a Lagrangian $L$ of a physical system  (more generally, any cost function), vakonomic approach describes trajectories as critical points (minimizers)
of the functional $\mathcal L=\int L \,dt$  on the set of admissible paths, i.e., paths subordinated to the given distribution  \cite{kozlov1992problem}. Such trajectories, having the variational origin, may drastically differ from the dynamics given by the Lagrange--d'Alembert principle, defining the motion of nonholonomic systems and requiring that the constraint forces would do no work on virtual displacements consistent with the
constraints \cite{bloch2003nonholonomic}.   Those two principles coincide for holonomic systems. 
In the papers \cite{kozlov1983realization, kozlov1992problem}, a general setting was described in which, by introducing a regularization via Rayleigh dissipation and taking different limits, one is led to either nonholonomic or vakonomic equations. We start by
reviewing and illuminating with figures  the classical example of a skate on an inclined plane considered in \cite{kozlov1983realization}.

This is followed by infinite-dimensional examples that include subriemannian and Euler–Poincaré–Suslov systems on Lie groups, and in particular, the Heisenberg chain and the general Camassa–Holm equation. Next we describe in more details the infinite-dimensional geometry of a nonholonomic Moser theorem and parity-breaking nonholonomic fluids, as well as   nonholonomic approximations of the Eulerian ideal hydrodynamics and potential solutions to Burgers-type equations arising in optimal mass transport. 

After that we return to the skate example in the context of Goursat distributions and the $n$-trailer systems. 
The reader might find it interesting to compare the many-trailer systems for a unicycle or skate 
with those of a car, which turns out to be related to a dimensional shift for the corresponding configuration space. 
Finally, we present the kinematics of its limit as
$n\to \infty$. We show that its infinite-dimensional version  is a snake-like motion of the 
Chaplygin sleigh with a string and it is subordinated to  an infinite-dimensional Goursat distribution.

For the reader's convenience, Table~\ref{tab:intro-summary} summarizes the main classes of constrained dynamics considered in the paper, together with the corresponding  geometric settings and the types of symmetry.

\begin{table}[t]
\centering
\caption{Examples of constrained dynamics discussed below}
\label{tab:intro-summary}
\renewcommand{\arraystretch}{1.12}
\setlength{\tabcolsep}{5pt}
\begin{tabularx}{\textwidth}{
>{\raggedright\arraybackslash}p{4.6cm}
>{\raggedright\arraybackslash}p{4.2cm}
>{\raggedright\arraybackslash}X}
\hline
\textbf{Type of dynamics} & \textbf{Type of symmetry} & \textbf{Systems} \\
\hline
Lagrange-d'Alembert principle
&Lie group $G$ 
&Euler-Poincar\'e-Suslov \hyperref[sect:Euler--Arnold]{\S\ref*{sect:Euler--Arnold}}\\
& stochastic matrices & Lagrange-d'Alembert approximations of hydrodynamics \hyperref[sec:fluidapprox]{\S\ref*{sec:fluidapprox}}\\
& Goursat distributions & $n$-trailer systems \hyperref[sec:ntrailers]{\S\ref*{sec:ntrailers}}\\
& & Car parking \hyperref[sec:parkingcar]{\S\ref*{sec:parkingcar}}\\
& infinite-dimensional Goursat distribution& Snake motions \hyperref[sec:snake]{\S\ref*{sec:snake}}\\
\hline
Subriemannian/vakonomic systems
&Lie group $G$ 
&Euler-Arnold equations \hyperref[sect:Euler--Arnold]{\S\ref*{sect:Euler--Arnold}}\\
& $\widetilde{ {\rm Diff}(S^1)}$ & Camassa-Holm equation \hyperref[sec:camassa]{\S\ref*{sec:camassa}}\\
& ${\rm Diff}(M)$ & Nonholonomic Moser theorem \hyperref[sec:Moser]{\S\ref*{sec:Moser}}\\
& & Visual cortex \hyperref[sec:visualcortex]{\S\ref*{sec:visualcortex}}\\
& ${\rm Diff}_\mu(M)$ & Subriemannian approximations of hydrodynamics \hyperref[sec:fluidapprox]{\S\ref*{sec:fluidapprox}}\\
\hline
Both/quasiholonomic systems
&$C^\infty(S^1,{\rm SO}(3))$ 
& Heisenberg chain \hyperref[sec:heisenberg]{\S\ref*{sec:heisenberg}}\\
& ${\rm Diff}(M)$ & Potential Burgers equation \hyperref[sec:burgers]{\S\ref*{sec:burgers}}\\
\hline
Kozlov's interpolation
& Rayleigh dissipation& Ideal skate \hyperref[sec:skate]{\S\ref*{sec:skate}}\\
& ${\rm Diff}(M)\ltimes V$ 
& Parity breaking fluid  \hyperref[sec:parityfluid]{\S\ref*{sec:parityfluid}}\\
\hline
\end{tabularx}
\end{table}


\section{Vakonomic and nonholonomic systems: the ideal skate problem}\label{sect:skate}

Let us start with a finite-dimensional holonomic natural system, depending on parameters. By appropriately introducing 
the Rayleigh dissipation and taking various limits one can obtain  vakonomic or nonholonomic systems, see \cite{kozlov1983realization, kozlov1992problem}.

\subsection{Various limits of natural systems}

Let $M$ be a Riemannian  manifold with a metric $g$ and $\tau$ a bracket-generating distribution on $M$. 
We assume the bracket-generating property of $\tau$ so that there existed admissible paths (i.e., tangent to the distribution $\tau$) connecting any two points in $M$.

Consider a natural system $\mathcal L(\gamma):=\int_{\gamma} L(q, \dot q) \, dt$ for the Lagrangian $L=K-U$, where
$K=(1/2)g(\dot q, \dot q)$ is the kinetic energy corresponding to the metric $g$ and $U=U(q(t))$, while the integral is taken over the paths $\gamma=\{\gamma(t), \, t\in [0,1]\}$ with fixed endpoints.

{\it Vakonomic system} describes extremals of the functional $\mathcal L$ among admissible paths, i.e., among paths $\gamma$ subordinated to the distribution $\tau$. It has a variational origin, hence the name.

Solutions of a {\it nonholonomic system} satisfy the Lagrange--d'Alembert principle:
 $$
 \left(\frac{d}{dt}\frac{\partial L}{\partial \dot q}-\frac{\partial L}{\partial q}\right)\cdot \delta q=0
 $$ for every variation $\delta q\in \tau$. 

Now modify the Lagrangian $L$ by introducing a small parameter $\nu$:
$L_\nu=L+(1/2\nu) g(\dot q^\perp, \dot q^\perp)$, where  $\dot q^\perp$ is the component of the velocity vector $\dot \gamma(t)$ transversal to $\tau$.
For $\nu\not=0$ this is a {\it holonomic system} on $M$ with a nondegenerate metric.
Trajectories of the corresponding system are extremals of the functional $\mathcal L_\nu(\gamma)$, i.e., solutions of the Euler-Lagrange equation corresponding to $\delta \mathcal L_\nu/\delta\gamma=0$: 
$$
\frac{d}{dt}\frac{\partial L_\nu}{\partial \dot q}-\frac{\partial L_\nu}{\partial q}=0\,.
$$
It is well known that nonholonomic systems can be realized as limits of systems subject to friction forces~\cite{caratheodory1933schlitten}. 
Let us encode these forces via a Rayleigh dissipation function $
\frac{1}{\alpha} R(\dot{q}),
$
so that the limit \( \alpha \to +0 \) corresponds to infinitely strong friction.
The corresponding equation assumes the form
$$
\frac{d}{dt}\frac{\partial L_\nu}{\partial \dot q}-\frac{\partial L_\nu}{\partial q}=-\frac{1}{\alpha} \frac{\partial R(\dot{q})}{\partial \dot q}\,.
$$

The two types of systems --- vakonomic and nonholonomic --- can be obtained by taking different limits of the model with dissipation. 
The limit \( \nu \to +0 \) leads to vakonomic dynamics, while the limit \( \alpha \to +0 \) results in nonholonomic dynamics governed by the Lagrange--d'Alembert principle. 
Kozlov~\cite{kozlov1983realization} showed that in the double limit \( \nu, \alpha \to +0 \), with fixed ratio \( \mu = \nu/\alpha \), one obtains a one-parameter family of systems whose equations interpolate between vakonomic (\( \mu \to +0 \)) and nonholonomic (\( \mu \to +\infty \)) regimes.

\bigskip

\subsection{Example: the ideal skate problem}
\label{sec:skate}

As an illustration of the above, we consider a skate moving on an inclined plane. Let \( (x, y) \) denote the contact point of the skate and \( \theta \) its orientation angle. The configuration space is \( Q = \mathbb{R}^2 \times S^1 \).
The skate is subject to the nonholonomic constraint:
$$
\phi := \dot{x} \sin\theta - \dot{y} \cos\theta = 0.
$$
Following \cite{kozlov1983realization, kozlov1992problem}, we consider an extended Lagrangian \( L_\nu \) of an unconstrained system with dissipation given by the Rayleigh dissipation function \( R_\alpha \):
\begin{align*}
    L_\nu &= \frac{1}{2}(\dot{x}^2 + \dot{y}^2 + \dot{\theta}^2) - gx + \frac{1}{2\nu} \phi^2, 
 \qquad
    R_\alpha = \frac{1}{2\alpha} \phi^2.
\end{align*}
Here $g$ is a parameter corresponding to the strength of the gravitational force. 
The corresponding canonical momenta are:
\begin{align*}
    p_x &= \dot{x} + \frac{1}{\nu} \phi \sin\theta, \quad
    p_y = \dot{y} - \frac{1}{\nu} \phi \cos\theta, \quad
    p_\theta = \dot{\theta}.
\end{align*}
The Euler--Lagrange equations with Rayleigh dissipation become:
\begin{align*}
    \dot{p}_x + \frac{1}{\alpha} \phi \sin\theta = -g, \quad
    \dot{p}_y  - \frac{1}{\alpha} \phi \cos\theta = 0, \quad
    \dot{p}_\theta = \frac{1}{\nu} \phi \rho,
\end{align*}
where we define the velocity along the blade direction: 
$$
    \rho := \dot{x} \cos\theta + \dot{y} \sin\theta.
$$

\begin{figure}[ht!]
\centering

\begin{subfigure}[t]{0.48\textwidth}
  \centering
  \includegraphics[width=\textwidth]{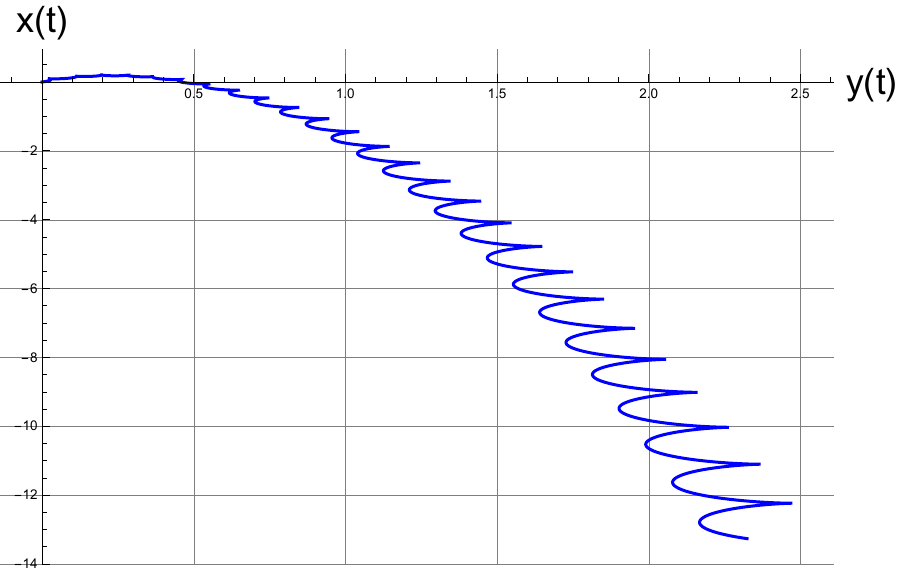}
  \caption{With gravity ($g =1$)}
  \label{fig:trit2a}
\end{subfigure}
\hfill
\begin{subfigure}[t]{0.48\textwidth}
  \centering
  \includegraphics[width=\textwidth]{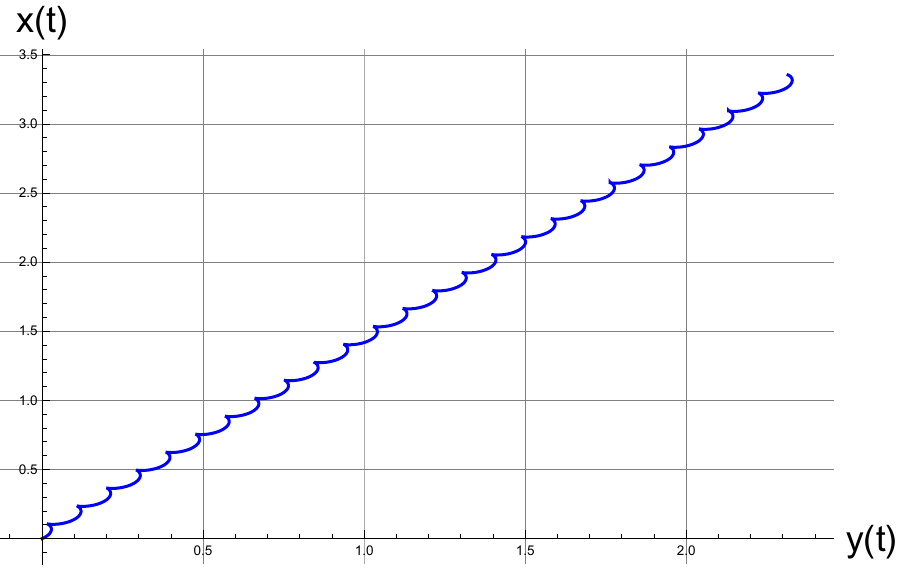}
  \caption{Zero gravity ($g = 0$)}
  \label{fig:trit2b}
\end{subfigure}

\caption{Skate motion according to the vakonomic equations corresponding to $\mu=0$. Initial conditions are: $x_0=y_0=0$, $\theta_0=\pi/4$, $v_0=1$, $\omega_0=-10$. Left: the trajectory of the center of mass of the skate under gravity ($g =1$) for $0 \leq t \leq 8$. Right: the same motion in the absence of gravity ($g = 0$). (Note that the scale of the two panels is different. The motion on the left starts heading up before changing to the  downward drift.)}
\label{fig:trit2}
\end{figure}

\begin{figure}[ht!]
\centering

\begin{subfigure}[t]{0.68\textwidth}
  \centering
  \includegraphics[width=\textwidth]{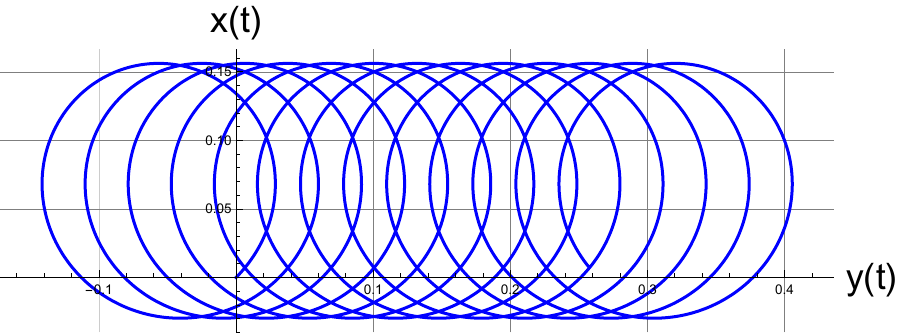}
  \caption{With gravity ($g = 1$)}
  \label{fig:trit1a}
\end{subfigure}
\hfill
\begin{subfigure}[t]{0.28\textwidth}
  \centering
  \includegraphics[width=\textwidth]{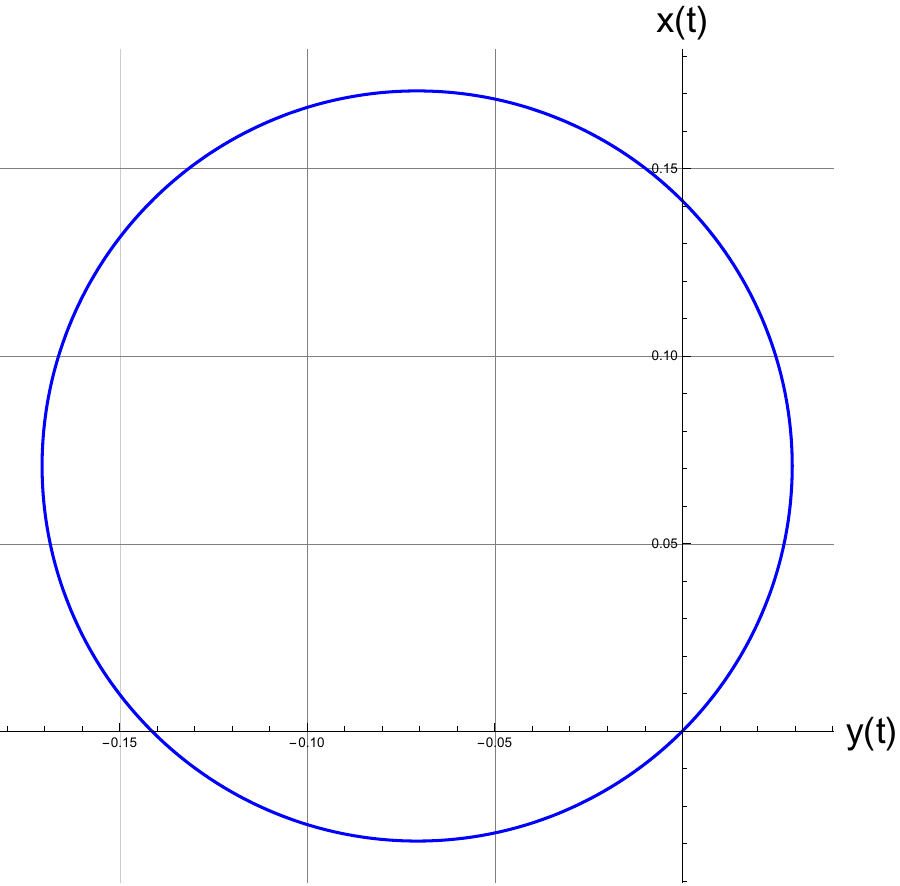}
  \caption{Zero gravity ($g = 0$)}
  \label{fig:trit1b}
\end{subfigure}

\caption{Skate motion governed by the Lagrange--d'Alembert equations in the limit $\mu \to +\infty$. Initial conditions are the same as in Figure~\ref{fig:trit2}. Left: for gravitational acceleration $g = 1$, the trajectory exhibits bounded oscillations forming a cycloidal path~\cite{kozlov1983realization}. Right: in the absence of gravity ($g = 0$), the skate follows a circular trajectory without gravitational drift.}
\label{fig:trit1}
\end{figure}

Now, following \cite{kozlov1983realization}, we take the limit 
\( \nu \to 0 \) and \( \alpha \to 0 \), keeping their ratio fixed \( \mu = \nu/\alpha = \text{const} \). Assuming all fields and initial conditions are \( O(1) \), we obtain:
\begin{align*}
    &\ddot{x} - (\lambda \sin\theta)_t - \mu \lambda \sin\theta = -g, \\
    &\ddot{y} + (\lambda \cos\theta)_t + \mu \lambda \cos\theta = 0, \\
    &\ddot{\theta} = -\lambda \rho, \\
    &\dot{\rho} = -\cos\theta + \lambda \dot{\theta}, \\
    &\dot{\lambda} = -\rho \dot{\theta} + \sin\theta - \mu \lambda.
\end{align*}
This is a dynamical system characterized by an additional parameter \( \mu \). It is easy to show that the energy of the skate given by 
\begin{align*}
    E = \frac{1}{2}(\dot{x}^2+\dot{y}^2+\dot{\theta}^2) +gx = \frac{1}{2}(\rho^2+\dot{\theta}^2) +gx
\end{align*}
is conserved for all values of $\mu$ (we used the constraint in $\dot{x}^2+\dot{y}^2=\rho^2+\phi^2=\rho^2$). 
In the limit \( \mu \to +0 \), we recover the vakonomic equations; see Figure~\ref{fig:trit2}. 

For the opposite limit \( \mu \to +\infty \), we substitute \( \lambda = f/\mu + \mbox{o}(\mu^{-1}) \) and obtain the Lagrange--d'Alembert equations; see Figure~\ref{fig:trit1}. Therefore, the system extended by the parameter \( 0 < \mu < \infty \) interpolates between the vakonomic and Lagrange--d'Alembert equations. Equations for any value of \( \mu \) can be realized in physical systems \cite{kozlov1983realization}; see, for instance, a typical trajectory for an intermediate case in Figure~\ref{fig:trit3}.

The behavior of the skate trajectory is quite rich and strongly depends on the initial conditions. The clear difference between vakonomic and Lagrange--d'Alembert dynamics can be seen from the examples illustrated in Figures \ref{fig:trit2} and \ref{fig:trit1}.

\begin{figure}[ht!]
\centering

\begin{subfigure}[t]{0.68\textwidth}
  \centering
  \includegraphics[width=\textwidth]{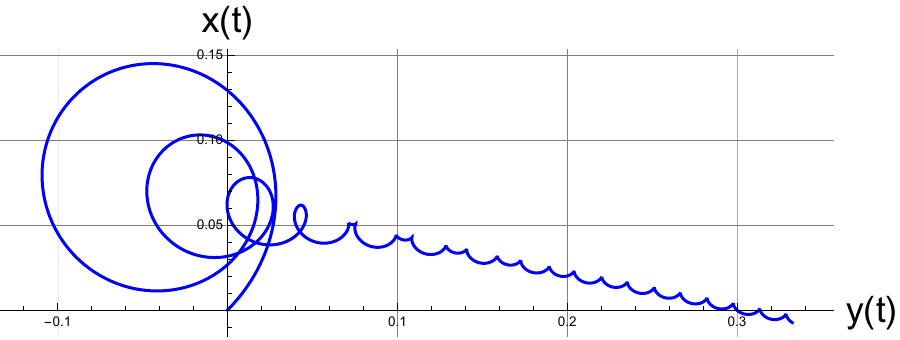}
  \caption{With gravity ($g = 1$)}
  \label{fig:trit3a}
\end{subfigure}
\hfill
\begin{subfigure}[t]{0.28\textwidth}
  \centering
  \includegraphics[width=\textwidth]{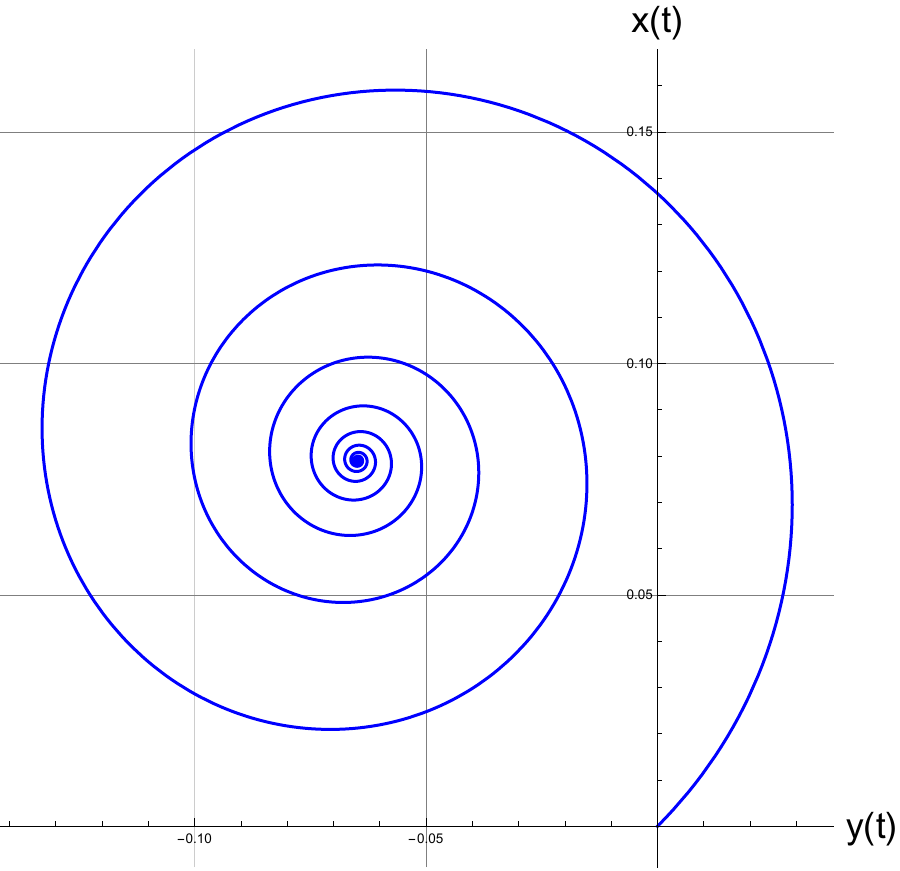}
  \caption{Zero gravity ($g = 0$)}
  \label{fig:trit3b}
\end{subfigure}

\caption{Skate motion for an intermediate value of the parameter \( \mu = 100 \). Initial conditions are the same as in Figure~\ref{fig:trit2}, but the plot scale is different. Left: motion under gravity ($g = 1$). Right: the same dynamics in the absence of gravity ($g = 0$).}
\label{fig:trit3}
\end{figure}

\section{Group symmetry in nonholonomic systems}

\subsection{Subriemannian and Euler-Poincar\'e-Suslov systems}\label{sect:Euler--Arnold}


An important source of examples  is provided by one-sided invariant subriemannian metrics on Lie groups
and the corresponding Euler-Poincar\'e (or Euler--Arnold) equations. As discussed above, 
there are two approaches to define geodesic lines among admissible paths: as ``straightest'' lines, 
defined by the Lagrange-d'Alembert principle,  and as ``shortest'' lines, defined by the variational principle. 

\begin{definition}
The  (``classical'') {\it Euler--Arnold equation} describing geodesics with respect to a right-invariant metric on a Lie group $G$ has the form
$$
 m_t= -{\rm ad}^*_{A^{-1}m}m
 $$
 for a point $m\in \mathfrak{g}^*$ in the dual space to the corresponding Lie algebra $\mathfrak{g}$. Here 
 $A:\mathfrak{g}\to\mathfrak{g}^*$ is an inertia operator  defining the metric on the group $G$
 by fixing the inner product $( v,v) :=\frac 12 \langle v,Av\rangle$ on  $\mathfrak{g}=T_eG$. 
 \end{definition}

This equation is Hamiltonian on 
 $\mathfrak{g}^*$ with the Hamiltonian function $H(m):=-\frac 12 \langle A^{-1}m, m\rangle$ with respect to the Lie-Poisson bracket on $\mathfrak{g}^*$. Note that to write the Euler--Arnold equations in the Hamiltonian form we only need the inverse operator $B:=A^{-1}: \mathfrak{g}^*\to\mathfrak{g}$. 
\medskip
 
Suppose now that we are also given a constraint in the form of a right-invariant distribution $\tau$ on $G$, defined as right shifts of a subspace $\ell\subset \mathfrak{g}$ at the identity $\mathfrak{g}=T_eG$. We also assume that the distribution is nonintegrable (i.e., $\ell$ is not a subalgebra) and bracket-generating on $G$ (i.e., $\ell$ generates the Lie algebra $\mathfrak{g}$ by commutators). This subspace can be defined as the null subspace for 
several elements in the dual space: $\ell:=\mathfrak{g}_a:=\{v\in \mathfrak{g}~|~a_i(v)=0\,\text{ for }\, a_i\in \mathfrak{g}^*,\,\,
 i=1,..,k\}$. Fixing a subriemannian metric on $\tau$, i.e., an inner product on the subspace $\ell$ is equivalent to defining a degenerate operator $B: \mathfrak{g}^*\to\mathfrak{g}$ whose image is $B(\mathfrak{g}^*)=\ell$.

Now we can describe the corresponding vakonomic and  Lagrange-d'Alembert trajectories corresponding to the 
kinetic energy in both settings.
In the {\it vakonomic setting}, we are describing normal subriemannian geodesics, the  ``shortest lines''. They are given by the same Hamiltonian equation on $\mathfrak{g}^*\ni m$, as the classical Euler--Arnold case:
$$
 m_t= -{\rm ad}^*_{B(m)}m\,,
 $$
 but where the operator $B: \mathfrak{g}^*\to\mathfrak{g}$ is non-invertible and $B(\mathfrak{g}^*)=\ell$.
 Its level sets are degenerate quadrics (``cylinders'') in $\mathfrak{g}^*$. The $B$-image in $\mathfrak{g}$ 
  of initial conditions $m$  with different linear combinations of $a_i$ give the same initial velocity:
  $v:=B(m)=B(m+\lambda_ia_i)\in \ell\subset \mathfrak{g}$. However the corresponding trajectories in the group $G$
 with the same initial velocity $v$ may differ, so that the values $\lambda_i a_i$ can be thought of as ``accelerations'' for those trajectories.
 
 \medskip

In the setting of the {\it Lagrange-d'Alembert principle} the corresponding trajectories are governed 
by the so-called {\it Euler-Poincar\'e-Suslov systems}. The corresponding equation has the form similar to the 
Euler--Arnold, but it is non-Hamiltonian in general: it features additional  terms in the right-hand side 
corresponding to the constraints on  $\mathfrak{g}_a:=\{v\in \mathfrak{g}~|~a_i(v)=0\}$ 
for some fixed elements $ a_i\in  \mathfrak{g}^*$. Namely, the Euler-Poincar\'e-Suslov equation 
in that case is
$$
    m_t= -{\rm ad}^*_{A^{-1}m}m+\sum\lambda_i a_i\,,
$$
where $\lambda_i$ are Lagrange multipliers.
This equation is usually written on the Lie algebra itself, where a {\it nondegenerate} operator $A:\mathfrak{g}\to \mathfrak{g}^*$ identifies $\mathfrak{g}$ and $\mathfrak{g}^*$, and  the Lagrange multipliers are determined by the relations $a_i(v)=0$ for $v=A^{-1}m$.

\begin{remark}\label{rem:coincide}
There is a particularly interesting case, when the subspace $\ell\subset \mathfrak{g}$
is itself invariant under the Euler--Arnold equation. In this case the Lagrange multipliers vanish, and 
the corresponding three problems have the same flows: the nonholonomic Lagrange-d'Alembert flow, the vakonomic (or subriemannian geodesic) flow, and the unconstrained (Euler--Arnold) geodesic flow on $G$ restricted to the initial conditions in this subspace $\ell$, so this is a quasiholonomic system, see \cite{jovanovic2001geometry,fedorov2006integrable}.
\end{remark}

\subsection{Heisenberg chain equations on loop groups}
\label{sec:heisenberg}

  The {\it Heisenberg magnetic chain} (or inviscid Landau--Lifschitz) equation has the form
 $$
 \partial_t L = L\times L''\,.
 $$   
It has several equivalent formulations, and, in particular, it is equivalent to the binormal equation 
    $$ 
   \partial_t \gamma =     \gamma'\times\gamma''\,, 
  $$ 
on an arc-length parametrized closed curve $\gamma\subset\mathbb R^3$ under the Gauss map $L=\gamma'$.
The following proposition has been a folklore statement, see e.g. \cite{AK}. 

\begin{proposition}
The Heisenberg chain equation is a geodesic equation for a left-invariant
  subriemannian metric on the loop group   $L{\rm SO}(3)=C^\infty(S^1,{\rm SO}(3))$. 
\end{proposition}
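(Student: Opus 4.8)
The plan is to realize the Heisenberg chain as the normal (vakonomic) subriemannian geodesic equation of Section~\ref{sect:Euler--Arnold}, for one specific degenerate inertia operator on the loop group, so that the proof reduces to identifying the operator and checking the bracket-generating hypothesis. First I would fix the loop algebra and its dual. The Lie algebra of $L{\rm SO}(3)$ is $\mathfrak{g}=C^\infty(S^1,\mathfrak{so}(3))$, which under the standard isomorphism $\mathfrak{so}(3)\cong(\R^3,\times)$ becomes $C^\infty(S^1,\R^3)$ with the pointwise cross-product bracket $[u,v](x)=u(x)\times v(x)$. Using the $L^2$ pairing $\langle m,v\rangle=\int_{S^1} m\cdot v\,dx$ to identify $\mathfrak{g}^*$ with $\mathfrak{g}$, a one-line triple-product computation gives the coadjoint action $\ad^*_u m=m\times u$, since $\int m\cdot(u\times v)\,dx=\int (m\times u)\cdot v\,dx$.

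Next I would pin down the subriemannian metric by matching the target equation. The left-invariant Euler--Arnold/normal-geodesic equation has the form $m_t=\ad^*_{B(m)}m=m\times B(m)$ (the sign opposite to the right-invariant case displayed in Section~\ref{sect:Euler--Arnold}), so producing the term $L\times L''$ forces $B(m)$ to be a multiple of $m''$. This selects the degenerate inertia operator $B=-\partial_x^2:\mathfrak{g}^*\to\mathfrak{g}$, equivalently the Dirichlet energy $H(m)=\tfrac12\int_{S^1}|m_x|^2\,dx$. Its kernel is the space of constant loops $\R^3$, so $B$ is not invertible and the associated metric is genuinely subriemannian rather than Riemannian; its image $\ell=B(\mathfrak{g}^*)$ is the space of mean-zero loops, which is the distribution $\tau$ at the identity, and the induced inner product on $\ell$ is $(v,v)=\langle B^{-1}v,v\rangle=\|v\|_{\dot H^{-1}}^2$. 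I would then verify that $\ell$ is bracket-generating: for constant vectors $a,b\in\R^3$ the loops $a\cos x,\ b\cos x$ lie in $\ell$ and have cross product $(a\times b)\cos^2 x$ with nonzero mean $\tfrac12(a\times b)$; as $a,b$ vary the vectors $a\times b$ span $\R^3$, so iterated brackets of mean-zero loops recover the missing constant directions and generate all of $\mathfrak{g}$.

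With these choices the geodesic equation becomes $m_t=m\times(-\partial_x^2 m)=-m\times m''$, i.e.\ $\partial_t L=L\times L''$ after setting $L=m$ and fixing the orientation in $\mathfrak{so}(3)\cong\R^3$ (equivalently the sign of the cross product) so that $\ad^*_u m=u\times m$; the two orientation choices differ only by time reversal and describe the same geometry. I would also observe that $|m(x)|$ is conserved pointwise, since $m_t\cdot m=(m\times m'')\cdot m=0$; this pointwise Casimir of the Lie--Poisson structure is exactly the arc-length constraint $|L|=|\gamma'|=1$ underlying the Gauss-map and binormal formulations, so the flow preserves the sphere $\{|L|=1\}$ on which the binormal equation lives.

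The hard part will be the infinite-dimensional bookkeeping rather than the algebra. The inertia operator $B=-\partial_x^2$ is unbounded and its inverse is only defined modulo constants, so one must be careful that the metric, the Euler--Poincar\'e reduction, and the equation $m_t=\ad^*_{B(m)}m$ are all well posed on the appropriate Sobolev completions and on the mean-zero part of $\mathfrak{g}$; in particular the distribution $\ell$ and its complement must be handled as closed subspaces. One must also state the bracket-generating property at the level of the group, not just the algebra, to justify the existence of admissible paths as in Section~\ref{sect:skate}. Finally, for the curve picture one would connect the reduced momentum $m=L$ to the Gauss map $L=\gamma'$ and the binormal equation $\partial_t\gamma=\gamma'\times\gamma''$ via the Hasimoto framing, but this is standard and is not needed for the geodesic statement itself.
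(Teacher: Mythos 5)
Your proposal is correct and takes essentially the same route as the paper: both identify $L\mathfrak{so}(3)$ with its dual, introduce the degenerate inertia operator $B(Y)=-Y''$ whose image is the hyperplane $\ell$ of zero-mean loops carrying the $H^{-1}$-metric, and read off the Heisenberg chain as the Euler--Arnold equation for the resulting left-invariant subriemannian (normal geodesic) flow. Your explicit bracket-generating computation, the coadjoint-action/sign bookkeeping, and the pointwise conservation of $|m|$ are welcome refinements beyond the paper, which only notes that $\ell$ fails to be a subalgebra and absorbs the ambiguity $B\mapsto \mathrm{id}+B$ in a separate remark.
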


  So, from this point of view, it satisfies the vakonomic principle. 
  Such geodesics are described by a Hamiltonian 
  system on the cotangent bundle $T^*L{\rm SO}(3)$, and hence, due to the invariance, as the Euler--Arnold 
  Hamiltonian equation on  the dual of the Lie algebra   $L\mathfrak{so}(3)=C^\infty(S^1,\mathfrak{so}(3))$. 
  This Hamiltonian formulation  is as follows. Let us identify the Lie algebra 
  $L\mathfrak{so}(3)$ with (the smooth part of) its dual $L\mathfrak{so}(3)^*$   via the pairing 
  $$ 
    \langle X,Y\rangle =-\int_{S^1}{\rm tr}(X(\theta)Y(\theta))\, d\theta\,. 
  $$ 
  While usually for the Euler--Arnold equations one specifies an invertible inertial operator $A:\mathfrak{g}\to\mathfrak{g}^*$, now we define the following noninvertible   self-adjoint operator
  $B: L\mathfrak{so}(3)^*\to L\mathfrak{so}(3)$ acting in the opposite 
  direction: $ B(Y)=-Y''$. 
(If $B$ were invertible, it would have the meaning of the inverse of
the corresponding inertia operator: $B=A^{-1}$.)
  The corresponding Hamiltonian function on the dual space 
  $L\mathfrak{so}(3)^*$  is given by 
$$ 
H(Y):=\frac12 \langle Y,  B(Y)\rangle=-\frac12 \langle Y, Y''\rangle
                     =\frac12 \langle Y', Y'\rangle=-\frac12 \int_{S^1}{\rm tr}(Y'(\theta))^2\, d\theta\,\,
$$ 
for $Y\in L\mathfrak{so}(3)^*$.

The image of $B$ in  $L\mathfrak{so}(3)$  is the subspace $\ell$ of
$\mathfrak{so}(3)$-valued functions on the circle with zero mean. 
On this hyperplane $\ell\subset L\mathfrak{so}(3)$ 
the operator $B$ can be inverted, and this gives 
rise to the  $H^{-1}$-metric 
$$ 
E(X)=\frac 12\langle \partial_\theta^{-1}X, \partial_\theta^{-1}X\rangle\,,
$$
since it is given by the squared $L^2$-norm of the antiderivative 
$\partial_\theta^{-1}X$ for functions $X$ with zero mean, $X\in \ell\subset L\mathfrak{so}(3)$.

Note that the quadratic form on the subspace $\ell$
does not extend to a left-invariant Riemannian metric on a subgroup
of  $L{\rm SO}(3)$. Indeed, this subspace 
$\ell\subset L\mathfrak{so}(3)$ does
not form a Lie subalgebra: the bracket of two loops with zero mean
does not necessarily have zero mean. 
The subspace  $\ell$ of the tangent
space at the identity $\id\in L{\rm SO}(3)$ generates a left-invariant 
distribution on the group $L{\rm SO}(3)$, and we can extend the 
quadratic form $E(X)$ from $\ell$ to a metric on this distribution.
This provides an example of an infinite-dimensional nonintegrable distribution
on a group with a left-invariant subriemannian metric. 
Normal geodesics for this metric 
are described by the same Hamiltonian picture as for a 
left-invariant Riemannian metric on the group, i.e., by the 
Heisenberg magnetic chain (or Landau--Lifschitz) equation.

A similar Landau--Lifschitz equation    $\partial_t L = [L, L'']$ 
 with the same Hamiltonian $H$  can be defined on  the loops in any   semisimple Lie algebra $\mathfrak g$, where 
  $-{\rm tr}(XY)$ is replaced by the  Killing   form  on  $\mathfrak g$, and $[~,\,]$ stands for the commutator in this loop Lie algebra. 
 
\begin{remark}
In the case of the Heisenberg magnetic chain the Lagrange multiplier $\lambda=0$, as the zero 
 mean constraint holds for geodesics in a nondegenerate metrics as well, as discussed in Remark \ref{rem:coincide}. Indeed, the same Hamiltonian equation 
 on $L\mathfrak{so}(3)^*$ can be obtained from an invertible operator $\tilde B := id+B$, i.e., 
 for $\tilde B (Y ) := Y-Y''$, which defines a nondegenerate left-invariant Riemannian metric on the group 
 $L{\rm SO}(3)$. Indeed, the addition of the identity inertia operator does not change the Hamiltonian
dynamics on the orbits, since the latter operator corresponds to the Killing form, and hence on each 
 coadjoint orbit the new Hamiltonian differs from the old one by a constant. Thus in this case vakonomic and Lagrange-d'Alembert trajectories coincide.
In a sense, the above example, as well as the Camassa-Holm and Burgers-type equations discussed below, 
can be regarded as holonomic systems treated from the vakonomic point of view. 
\end{remark}


\subsection{The general Camassa-Holm equation on the diffeomorphism group}
\label{sec:camassa}

The {\it general Camassa--Holm (CH) equation}
$$
u_t + \kappa u_x - u_{txx} + 3uu_x - 2u_xu_{xx} - uu_{xxx} = 0\,
$$
describes an evolution of the fluid velocity $u=u(x,t)$ according to a shallow water approximation 
on the circle (or in 1D in general). For any real constant $\kappa$ there are several ways to view this equation 
as a version of the Euler--Arnold equation on a certain extension of the Lie group 
${G}={\rm Diff}(S^1)$ of circle diffeomorphisms, see \cite{misiolek1998}.

For $\kappa=0$ one obtains the {\it ``classical'' CH equation}.
It is known that in the latter case one can regard the CH equation as the Euler--Arnold equation
for the right-invariant $H^1$-metric on the group ${G}={\rm Diff}(S^1)$ (or on the Virasoro algebra), 
where the metric at the identity
$id\in {\rm Diff}(S^1)$ is given by the following $H^1$-inner product on $\mathfrak{g}={\rm Vect}(S^1)\ni u$:
$$
\frac 12\int_{S^1} ((u, u)+(u_x, u_x)) \,dx\,.
$$

In order to obtain the CH equation with nonzero parameter $\kappa$ one can consider the shift by a constant 
$u\mapsto u+c$, accompanied by reweighting the metric. (Such a shift, however, is not available for the analysis of vector fields $u$ on the line, as it destroys the decay conditions imposed on the fields. A different approach, described below, resolves this difficulty.) Alternatively, one can consider the general CH equation --- either on the line or on the circle --- defined on a trivial central extension of the diffeomorphism group. To fix ideas, consider an extension of circle diffeomorphisms. We start with the Lie algebra which 
 is the  extension $\widetilde{{\rm Vect}(S^1)} ={\rm Vect}(S^1)\times \mathbb{R}$ given by the commutator 
$$
[(u\partial, a), (v\partial, b)]:=([u\partial,v\partial],  \,\, \int_{S^1} u_xv\, dx),
$$ 
which is the extension of the Lie algebra ${\rm Vect}(S^1)$ of vector fields on the circle by means of the {\it (trivial) 2-cocycle} $c(u,v):=\int_{S^1} u_xv\, dx$. 
There exists a central extension 
$\widetilde{ {\rm Diff}(S^1)}={\rm Diff}(S^1)\times \mathbb{R}$ of the Lie group of circle diffeomorphisms ${\rm Diff}(S^1)$ corresponding to the above extension $\widetilde{{\rm Vect}(S^1)}$ of the Lie algebra ${\rm Vect}(S^1)$.

\begin{proposition}[\cite{misiolek1998, grong2015sub, khesin2024information}]
The Euler--Arnold equation for the geodesic flow in the right-invariant $L^2$-metric on the group 
$\widetilde{ {\rm Diff}(S^1)}$  gives the general CH equation with an arbitrary constant $\kappa\in \mathbb R$.    
\end{proposition}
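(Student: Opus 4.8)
The plan is to realize the general CH equation as the Euler--Arnold equation $m_t=-\mathrm{ad}^*_{A^{-1}m}m$ on the dual $\widetilde{{\rm Vect}(S^1)}^{*}$ of the extended algebra, where $A$ is the inertia operator of the given right-invariant metric, and to read off the constant $\kappa$ from the (conserved) central charge of the extension. I would proceed in three steps: set up the dual pairing and compute the coadjoint action of $\widetilde{{\rm Vect}(S^1)}$; write out the resulting Euler--Arnold equation; and match it term by term with the general CH equation, checking that the central parameter supplies $\kappa$ and that $\kappa=0$ returns the classical case of the preceding proposition.

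First I would describe $\widetilde{{\rm Vect}(S^1)}^{*}={\rm Vect}(S^1)^{*}\times\mathbb{R}$ with the pairing $\langle (m,\gamma),(v,b)\rangle=\int_{S^1}mv\,dx+\gamma b$, writing a covector as $(m\,dx^2,\gamma)$. The heart of the argument is the coadjoint computation. Using the extended bracket $[(u,a),(v,b)]=([u\partial,v\partial],\ \int_{S^1}u_xv\,dx)$ and integrating by parts, I would obtain
$$\mathrm{ad}^*_{(u,a)}(m,\gamma)=\bigl(u\,m_x+2u_x\,m+\gamma\,u_x,\ 0\bigr)\,.$$
Two features are decisive. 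The $\mathbb{R}$-component vanishes, so $\gamma$ is a first integral: the central charge is constant along the flow and may be fixed orbit by orbit. And the trivial cocycle $\int_{S^1}u_xv\,dx$ feeds exactly one, purely first-order, term $\gamma\,u_x$ into the ${\rm Vect}(S^1)^{*}$-direction, with no higher-derivative contribution. This $\gamma\,u_x$ is precisely the mechanism that creates the linear term $\kappa\,u_x$ distinguishing general CH from the classical case.

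Second, writing $u=A^{-1}m$ for the inertia operator $A$ determined by the metric, the ${\rm Vect}(S^1)^{*}$-component of the right-invariant Euler--Arnold equation reads $m_t+u\,m_x+2u_x\,m+\gamma\,u_x=0$ (signs fixed by the orientation convention). The purely hydrodynamic block $m_t+u\,m_x+2u_x\,m=0$ is exactly the reduced form of the classical CH equation established in the previous proposition, so I would invoke that identification rather than re-expand the terms $-u_{txx},\,3uu_x,\,-2u_xu_{xx},\,-uu_{xxx}$. Adding the central contribution and setting the fixed central charge equal to $\gamma=\kappa$ then yields the full general CH equation for every real $\kappa$, realized on the coadjoint orbit of that charge; the value $\gamma=0$ recovers classical CH, the required consistency check. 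In this way the entire new content of the statement is concentrated in the central direction.

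The main obstacle is the bookkeeping of the coadjoint action on the extension: one must verify carefully that the coboundary cocycle contributes only the single first-order term $\gamma\,u_x$ — any stray higher-order contribution would destroy the CH form — and that the central direction genuinely decouples, so that $\gamma$ is constant and can be identified with $\kappa$. A secondary, more analytic, point is to justify the formal manipulations in the infinite-dimensional setting: that $\widetilde{{\rm Diff}(S^1)}$ is a bona fide (tame Fréchet) Lie group, that the right-invariant metric is smooth, and that $A^{-1}$ is defined on the relevant part of the smooth dual, so that the formal Euler--Arnold equation is the genuine geodesic equation of the metric.
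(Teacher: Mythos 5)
Your overall strategy --- compute the coadjoint action of the trivially extended algebra, observe that the central component decouples so that the charge $\gamma$ is conserved, and identify $\gamma$ with $\kappa$ --- is exactly the argument behind the cited result (the paper itself gives no proof, deferring to \cite{misiolek1998, grong2015sub, khesin2024information}), and your computation $\mathrm{ad}^*_{(u,a)}(m,\gamma)=\bigl(um_x+2u_xm+\gamma u_x,\ 0\bigr)$ is correct up to sign conventions. However, there is a genuine unresolved point at the single place where the metric enters. The proposition names the right-invariant $L^2$-metric, for which the inertia operator is the identity, i.e.\ $m=u$. Then your ``purely hydrodynamic block'' $m_t+um_x+2u_xm$ is $u_t+3uu_x$, the Hopf equation, not the reduced form of classical CH, and the Euler--Arnold equation becomes $u_t+3uu_x+\kappa u_x=0$, which is not the general CH equation. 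Your step ``invoke the identification from the previous proposition'' is legitimate only if the inertia operator on the $\mathrm{Vect}(S^1)$-summand is $A=1-\partial_x^2$, i.e.\ if the inner product on the extended algebra is the $H^1$ product $\int_{S^1}(uv+u_xv_x)\,dx+ab$: the terms $-u_{txx}$, $-2u_xu_{xx}$, $-uu_{xxx}$ of general CH cannot be produced by the identity inertia operator, with or without a central extension.

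So what you wrote is a proof of the (correct) $H^1$ statement --- which is what the cited references actually establish --- while nominally addressing an $L^2$ statement; the ``$L^2$'' in the proposition is evidently a slip, as one sees by comparing with the classical case $\kappa=0$ discussed immediately before it, which is explicitly the $H^1$ Euler--Arnold equation on ${\rm Diff}(S^1)$. Your own consistency check (``$\gamma=0$ recovers classical CH'') should have flagged this, since it forces the restriction of the metric to $\mathrm{Vect}(S^1)$ to be $H^1$, not $L^2$. The repair is one line: take $\tilde A(u,a)=(u-u_{xx},\,a)$, so that $\tilde A^{-1}(m,\gamma)=\bigl((1-\partial_x^2)^{-1}m,\,\gamma\bigr)$, and then your momentum equation $m_t+um_x+2u_xm+\gamma u_x=0$ with $\gamma=\kappa$ is literally the general CH equation. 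But as written, the phrase ``the inertia operator $A$ determined by the metric'' papers over precisely the compatibility check on which the whole identification rests. The rest of your argument --- the vanishing of the $\mathbb{R}$-component of the coadjoint action, conservation of the central charge, fixing $\kappa$ orbit by orbit, and the closing analytic caveats --- is sound.
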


\medskip

However, more importantly for us here,  the general Camassa-Holm equation can be described as a {\it subriemannian geodesic flow} on the (non-extended!)
group ${\rm Diff}(S^1)$ of circle diffeomorphisms. Namely, following \cite{grong2015sub},
 in the Lie algebra ${\rm Vect}(S^1)$ consider the hyperplane $\ell={\rm Vect}_0(S^1)$ of vector fields with zero mean, i.e., ${\rm Vect}_0(S^1):=\{u(x)\partial~|~\int_{S^1}u(x)\,dx=0\}$. This is not a Lie subalgebra, as the commutator is not closed for such vector fields. 
 Now look at the corresponding right-invariant distribution of hyperplanes 
in  ${\rm Diff}(S^1)$ generated by ${\rm Vect}_0(S^1)\subset {\rm Vect}(S^1)$
at the identity $id\in {\rm Diff}(S^1)$. 
It is nonintegrable  (since ${\rm Vect}_0(S^1)$ is not a Lie subalgebra), and it defines a nonholonomic bracket generating distribution (actually, a  contact structure) on the group ${\rm Diff}(S^1)$. 

Now fix the above $H^1$-metric  on ${\rm Vect}_0(S^1)$ and consider subriemannian geodesics on the group  
${\rm Diff}(S^1)$ with respect to the right-invariant metric on this distribution. The subriemannian geodesics are defined by an initial vector and one more parameter (``acceleration'', as we discussed above), and their equation will be the general Camassa-Holm equation, where $\kappa$ is exactly this extra parameter (see \cite{grong2015sub}). Indeed, one can see that addition of this extra term $\kappa u_x$ in the equation does
not change the condition of zero mean $\int u(x)dx=0$, i.e., it is lying in the kernel of the operator $B$ corresponding to the subriemannian metric.

This delivers one more example of  an infinite-dimensional vakonomic system,  the general Camassa-Holm equation as the subriemannian $H^1$- geodesic on the group ${\rm Diff}(S^1)$. Namely, this group is equipped with the right-invariant contact distribution, given at the identity by the constraint $\int_{S^1}u\,dx=0$. 
\medskip



\section{Parity breaking nonholonomic fluids}
\label{sec:parityfluid}

Hamiltonian structures play an important role in both compressible and incompressible fluid dynamics \cite{zakharov1997hamiltonian,morrison1998hamiltonian}. The equations of a classical fluid are invariant with respect to mirror reflections (often called parity transformations). It is interesting,  however, to consider another type of fluids whose equations can contain  parity-odd (or, parity breaking) terms.
Motivated by parity breaking in ideal two-dimensional fluids, the paper \cite{monteiro2023hamiltonian} considered the fluid dynamics with the stress tensor $T_{ij}$ containing parity breaking terms that are of the first order in spacial derivatives. 
As a simplified version of the general model studied in \cite{monteiro2023hamiltonian} we consider
the following stress and viscosity tensors:
\begin{align}
    T_{ij} &= -p(\rho) \delta_{ij} +\eta_{ijkl}\partial_kv_l,
 \label{stress-100}\\
    \eta_{ijkl} &= -\eta_H(\epsilon_{ik}\delta_{jl}+\epsilon_{jl}\delta_{ik}) +\Gamma_H(\epsilon_{ij}\delta_{kl}-\delta_{ij}\epsilon_{kl}).
 \label{eta-100}
\end{align}
Here $p(\rho)$ is the pressure function of the given compressible isotropic fluid, $\eta_{ijkl}(\rho)$ is a viscosity tensor of the fluid. The kinetic coefficients $\eta_H(\rho), \Gamma_H(\rho)$ are functions of the fluid's density $\rho$ and they describe odd viscosity and odd torque, respectively. 
Note that the terms involving $\eta_H$ and $\Gamma_H$ break parity and time reversal invariance of the corresponding dynamics given by the continuity and  Euler equations
\begin{align}
    \partial_t\rho +\partial_i(\rho v_i) &=0,
 \label{cont-100}\\
    \partial_t v_j +v_i\partial_i v_j &= \frac{1}{\rho} \partial_i T_{ij}\,.
 \label{euler-100}
\end{align}

\begin{proposition}
The above dynamics of parity breaking  fluids is dissipation-less.   
The conserved energy is  $  H =\int  [{\rho v^2}/2 +\varepsilon(\rho) ]\,d^2x$ for  the pressure $p=\rho\varepsilon'(\rho)-\varepsilon(\rho)$ and any 
$\eta_H(\rho)$ and $\Gamma_H(\rho)$  (here  prime $'$ denotes the derivative with respect to $\rho$). 
\end{proposition}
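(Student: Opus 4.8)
The plan is to verify $dH/dt=0$ directly: differentiate $H$ along solutions, substitute the equations of motion, and check that the resulting integrand is a total spatial divergence, which integrates to zero on a closed surface (or under suitable decay at infinity). Differentiating gives
$$
\frac{dH}{dt}=\int\Big(\tfrac12 v^2\,\partial_t\rho+\rho\,v_j\,\partial_t v_j+\varepsilon'(\rho)\,\partial_t\rho\Big)\,d^2x .
$$
Then I would insert the continuity equation \eqref{cont-100} in the form $\partial_t\rho=-\partial_i(\rho v_i)$ and the Euler equation \eqref{euler-100} multiplied by $\rho v_j$, namely $\rho\,v_j\partial_t v_j=-\rho v_iv_j\partial_i v_j+v_j\partial_i T_{ij}$, and organize the outcome into three groups: a kinetic group, a pressure/internal-energy group, and a viscous group built from $\eta_{ijkl}$.

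For the kinetic group, the two contributions $-\tfrac12 v^2\partial_i(\rho v_i)$ and $-\rho v_iv_j\partial_i v_j=-\tfrac12\rho v_i\partial_i(v^2)$ add up to the single divergence $-\tfrac12\partial_i(\rho v_i v^2)$, so they integrate to zero. For the pressure/internal-energy group I would use the stated relation $p=\rho\varepsilon'(\rho)-\varepsilon(\rho)$, which yields $\partial_j p=\rho\,\partial_j\varepsilon'(\rho)$, i.e. $\varepsilon'(\rho)$ is the specific enthalpy. The pressure term $-v_j\partial_j p=-\rho v_j\partial_j\varepsilon'(\rho)$ then combines with $-\varepsilon'(\rho)\partial_i(\rho v_i)$ into the divergence $-\partial_i\big(\rho v_i\,\varepsilon'(\rho)\big)$ and again integrates to zero. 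This is exactly where the prescribed pressure law enters.

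The substantive step is the viscous group $\int v_j\,\partial_i(\eta_{ijkl}\partial_k v_l)\,d^2x$; note that the density cancels, since the Euler equation carries the factor $1/\rho$ while the energy balance carries $\rho v_j$. Integrating by parts converts it into $-\int (\partial_i v_j)\,\eta_{ijkl}\,(\partial_k v_l)\,d^2x$, a pointwise quadratic form in the velocity gradient $G_{ij}:=\partial_i v_j$. The key observation is that the tensor \eqref{eta-100} is antisymmetric under the exchange of the two index pairs, $\eta_{klij}=-\eta_{ijkl}$: both the odd-viscosity part $-\eta_H(\epsilon_{ik}\delta_{jl}+\epsilon_{jl}\delta_{ik})$ and the odd-torque part $\Gamma_H(\epsilon_{ij}\delta_{kl}-\delta_{ij}\epsilon_{kl})$ reverse sign under $(ij)\leftrightarrow(kl)$. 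Hence $G_{ij}\eta_{ijkl}G_{kl}$ equals minus itself and vanishes identically, for \emph{any} coefficients $\eta_H(\rho)$ and $\Gamma_H(\rho)$. One can also see this termwise: $\epsilon_{ik}G_{ij}G_{kj}$ and $\epsilon_{jl}G_{ij}G_{il}$ vanish because a symmetric tensor is contracted with $\epsilon$, while the two $\Gamma_H$-terms equal $\pm\Gamma_H\,(\epsilon_{ij}\partial_i v_j)(\partial_k v_k)$ with opposite signs and cancel. Therefore the viscous contribution to $dH/dt$ is zero.

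Combining the three groups gives $dH/dt=0$, which is the claimed energy conservation and dissipation-freedom. I expect the crux to be conceptual rather than computational: the whole statement hinges on recognizing the pair-exchange antisymmetry of $\eta_{ijkl}$, which is precisely what distinguishes these reactive odd-viscosity/odd-torque terms from ordinary dissipative viscosity. The only hypothesis to state carefully is that the boundary terms dropped in the integrations by parts vanish, i.e. that $\rho$ and $v$ are periodic on the plane/torus or decay sufficiently fast at infinity.
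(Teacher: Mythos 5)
Your proof is correct. The paper states this proposition without any proof of its own (it defers to \cite{monteiro2023hamiltonian}), so there is no internal argument to compare against; your direct energy-balance computation is the natural verification. All the essential points are in place: the kinetic and enthalpy groups assembling into total divergences via $\partial_j p=\rho\,\partial_j\varepsilon'(\rho)$, and, crucially, the pair-exchange antisymmetry $\eta_{klij}=-\eta_{ijkl}$ of the tensor \eqref{eta-100}, which kills the quadratic form $(\partial_i v_j)\,\eta_{ijkl}\,(\partial_k v_l)$ pointwise for arbitrary $\eta_H(\rho)$ and $\Gamma_H(\rho)$ (the $\rho$-dependence of the coefficients is harmless since they enter as scalars) --- this is precisely what makes these parity-odd terms reactive rather than dissipative, and your boundary-term caveat is the only hypothesis that needs stating.
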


It turns out that for generic $\eta_H(\rho)$ and $\Gamma_H(\rho)$ 
the described fluid dynamics does not have a natural Hamiltonian structure. 
On the other hand, in the presence of
an additional relation $\hat\Gamma:=\Gamma_H - \eta_H +\rho \eta_H'=0$  between kinetic parameters 
the dynamics becomes holonomic and Hamiltonian, see  \cite{monteiro2023hamiltonian}.
\medskip

However,  one can enlarge this system by introducing additional fields 
so that it can be understood as satisfying certain {\it nonholonomic constraints in an extended phase space}. 
In particular, the paper \cite{monteiro2023hamiltonian} described a fluid whose fluid particles possess an intrinsic rotational degree of freedom $\ell$. The evolution of the intrinsic angular momentum $\ell$  is given by 
 \begin{equation}\label{eq:delta_l}
  \partial_t \delta\ell +\partial_i(\delta\ell\, v_i) = -2\hat\Gamma \partial_i v_i - \frac{\mu}{\nu} (\delta \ell),     
 \end{equation}
 where 
  $   \delta\ell = \ell +2\eta_H$. 
The new parameter $\mu>0$ describes the relaxation of the intrinsic angular momentum density $\ell$ of the fluid. This parameter $\mu$ is an analogue of the Rayleigh dissipation parameter discussed in
\cite{kozlov1992problem} and Section~\ref{sec:skate}.

The complete system, in addition to the equation \eqref{eq:delta_l} for $\delta\ell$, includes the analogs of dynamics equations  (\ref{cont-100},\ref{euler-100}) with the stress tensor (\ref{stress-100}) modified by $p\to  p^\ell$ and $\eta\to \eta^\ell$, where
\begin{align}
      p^\ell &= p +\frac{1}{2\nu}(\delta \ell)^2 +\frac{2}{\nu}\hat\Gamma\,\delta \ell,
 \label{pell-200}\\
    \eta^\ell_{ijkl} &= \frac{1}{2}\ell (\delta_{ik}\epsilon_{jl}+\delta_{jl}\epsilon_{ik}) +\Gamma_H(\delta_{ij}\epsilon_{kl}-\epsilon_{ij}\delta_{kl}).
 \label{etaell-200}
\end{align}
The parameter $\nu>0$ describes the coupling of the intrinsic angular momentum  $\ell$ of the fluid to the function of the density $\eta_H(\rho)$. Now the equations can be derived from the Hamiltonian of the fluid 
 $$  H_\nu =\int  \left[\frac{\rho v^2}{2} +\varepsilon(\rho) +\frac{1}{2\nu} (\delta \ell)^2\right]\,d^2x\,
$$ 
and the Rayleigh dissipation function
$$
    R_\mu = \int \left[\frac{\mu}{2\nu}  (\delta\ell)^2\right] \,d^2x \,.
$$
For details on the corresponding Poisson brackets see Ref.~\cite{monteiro2023hamiltonian}.
\smallskip

Consider the limits of the Rayleigh dissipation following \cite{kozlov1992problem}. Namely, having fixed $\mu$ one first considers the limit $\nu\to 0$ and from Equation \eqref{eq:delta_l} one observes that 
 $$
 \delta \ell = -\frac{4\nu}{\mu}\hat\Gamma\,\partial_i v_i  +O(\nu^{2})\to 0\,.
 $$
Substituting into (\ref{pell-200},\ref{etaell-200}) and 
keeping only terms of the zeroth order in $\nu$ we obtain equations (\ref{cont-100},\ref{euler-100}) with the stress and viscosity tensors (\ref{stress-100},\ref{eta-100}), where the pressure function changes as follows
\begin{align}
p(\rho) \to p(\rho) -\frac{8}{\mu}\hat\Gamma \,\partial_iv_i\,.
 \label{pressure-400}
\end{align}

Now in the limit $\mu \to +\infty$ the last term in \eqref{pressure-400}) vanishes and we obtain nonholonomic fluid whose dynamics is given by Equations (\ref{cont-100},\ref{euler-100},\ref{stress-100},\ref{eta-100}) and governed by the Lagrange--d'Alembert principle.
One can see that this limiting procedure is analogous to taking the limits $\nu\to 0$ and $\mu\to +\infty$ 
in the skate example in Section~\ref{sec:skate}, following the original derivation in \cite{kozlov1983realization}. 

On the other hand,  the limit $\mu\to 0$ requires the limit $\partial_i v_i\to 0$ in the formula above. The corresponding fluid is incompressible and is described by the vakonomic principles.
\medskip

Note that for  arbitrary $\mu$ and $\nu$ the rate of change of the energy is given by
$\partial_t H_\nu=- 2R_\mu$. In the limit $\nu\to 0$ the dissipation vanishes. The above considerations are summarized in the following statement.

\begin{theorem}{\rm (cf. \cite{monteiro2023hamiltonian})}
The fluid dynamics (\ref{cont-100},\ref{euler-100}) with stress and viscosity tensors given by (\ref{stress-100},\ref{eta-100}) with  pressure function \eqref{pell-200} conserves energy $  H =\int  [{\rho v^2}/2 +\varepsilon(\rho) ]\,d^2x$ for all values of $\mu>0$.

In the limit $\mu \to +\infty$ the system describes a nonholonomic barotropic-type fluid given by Equations (\ref{cont-100},\ref{euler-100},\ref{stress-100},\ref{eta-100}) and governed by the Lagrange--d'Alembert principle.

In the limit $\mu\to 0$ the corresponding fluid is incompressible and is described by the vakonomic principles.
\end{theorem}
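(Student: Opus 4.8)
\emph{Proof proposal.} The plan is to treat all three assertions as consequences of a single energy-balance identity for the extended system \eqref{cont-100}, \eqref{euler-100} (with the modified tensors \eqref{pell-200}, \eqref{etaell-200}) coupled to the relaxation equation \eqref{eq:delta_l}, and then to read off the two extreme regimes from the $\nu\to 0$ reduction. First I would differentiate the Hamiltonian $H_\nu=\int[\rho v^2/2+\varepsilon(\rho)+(1/2\nu)(\delta\ell)^2]\,d^2x$ along solutions, substituting $\partial_t\rho$ from the continuity equation, $\partial_t v_j$ from the Euler equation, and $\partial_t\delta\ell$ from \eqref{eq:delta_l}. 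Three cancellations are needed: (i) the barotropic identity $\partial_i p=\rho\,\partial_i(\varepsilon'(\rho))$, valid because $p=\rho\varepsilon'-\varepsilon$, makes the pressure work $\int p\,\partial_i v_i\,d^2x$ cancel the change of internal energy $\tfrac{d}{dt}\int\varepsilon\,d^2x$; (ii) the odd-viscosity stress does no work; and (iii) the extra pieces $(1/2\nu)(\delta\ell)^2+(2/\nu)\hat\Gamma\,\delta\ell$ of the modified pressure $p^\ell$ produce a work term $\int[\cdots]\partial_i v_i\,d^2x$ that is matched exactly by the transport and source terms of $\tfrac{d}{dt}\int(1/2\nu)(\delta\ell)^2\,d^2x$. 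What survives is only the relaxation term, giving the dissipation law $\partial_t H_\nu=-2R_\mu\le 0$.

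The computational heart is ingredient (ii). I would verify $\int (\partial_i v_j)\,\eta^\ell_{ijkl}\,(\partial_k v_l)\,d^2x=0$ term by term from \eqref{etaell-200}: the two pieces proportional to $\ell$ contract the antisymmetric $\epsilon$ with a velocity-gradient product that is symmetric in the paired indices and hence vanish pointwise, while the two pieces proportional to $\Gamma_H$ reduce to $(\nabla\cdot v)\,\omega-\omega\,(\nabla\cdot v)=0$ with $\omega=\epsilon_{ij}\partial_i v_j$. (The same computation shows the unmodified tensor \eqref{eta-100} is dissipationless, which is the content of the earlier Proposition.) With the identity $\partial_t H_\nu=-2R_\mu$ in hand, energy conservation for the physical energy $H=\int[\rho v^2/2+\varepsilon(\rho)]\,d^2x$ follows by passing to the limit $\nu\to 0$ at fixed $\mu$: the stiff relaxation in \eqref{eq:delta_l} forces $\delta\ell\to 0$ (of order $\nu$), so both the correction term in $H_\nu$ and the dissipation $R_\mu$ are $O(\nu)$ and vanish, leaving $\partial_t H=0$ for every $\mu>0$.

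For the two extreme regimes I would work with the $\nu\to 0$ reduced system, in which the only residue of the internal field is the pressure correction \eqref{pressure-400}, $p\mapsto p-(8/\mu)\hat\Gamma\,\partial_i v_i$. In the limit $\mu\to+\infty$ this correction vanishes and the dynamics collapses to \eqref{cont-100}, \eqref{euler-100} with the original tensors \eqref{stress-100}, \eqref{eta-100}; this is the strong-friction regime of the general scheme of Section~\ref{sec:skate}, in which the constraint $\delta\ell=0$ is enforced while the reaction (the $O(1/\mu)$ pressure term, the fluid analogue of the skate multiplier $\lambda=f/\mu+\mathrm{o}(\mu^{-1})$) does no virtual work, i.e.\ the Lagrange--d'Alembert dynamics. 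In the opposite limit $\mu\to 0$ the same correction $-(8/\mu)\hat\Gamma\,\partial_i v_i$ is finite only if $\partial_i v_i\to 0$; thus boundedness forces the incompressibility constraint $\nabla\cdot v=0$, and the surviving term plays the role of a Lagrange multiplier (an effective pressure) enforcing it. This is precisely the variational, vakonomic reduction, matching $\mu=\nu/\alpha\to 0$ in Section~\ref{sec:skate}.

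The main obstacle is twofold. The energy identity requires careful bookkeeping of the several $1/\nu$ terms to see that the cross terms cancel and only $-2R_\mu$ remains; this is routine but must be done with the correct signs. More substantively, the two limits are singular, and the delicate point is the $\mu\to 0$ case: one must argue that finiteness of the evolution genuinely forces $\nabla\cdot v\to 0$ and that the surviving pressure term is exactly the multiplier dual to this constraint, so that the limiting incompressible fluid is the vakonomic (shortest-path) reduction rather than the Lagrange--d'Alembert (straightest-path) one. This last identification is conceptual --- it ties the fluid limits to the abstract dichotomy of Section~\ref{sec:skate} and Remark~\ref{rem:coincide} --- rather than a further computation.
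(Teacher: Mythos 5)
Your proposal is correct and follows essentially the same route as the paper: the paper's own justification is the derivation preceding the theorem, namely the dissipation identity $\partial_t H_\nu=-2R_\mu$, the stiff-relaxation limit $\nu\to 0$ forcing $\delta\ell=-(4\nu/\mu)\hat\Gamma\,\partial_i v_i+O(\nu^2)\to 0$ (so the dissipation and the $(1/2\nu)(\delta\ell)^2$ correction both vanish, leaving $H$ conserved for every $\mu>0$), followed by reading off the pressure correction \eqref{pressure-400} and its two limits $\mu\to+\infty$ (Lagrange--d'Alembert) and $\mu\to 0$ (incompressible, vakonomic). Your added term-by-term check that the odd-viscosity stress does no work is simply a fill-in of the paper's earlier Proposition on dissipationlessness, not a departure from its argument.
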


One of intriguing open problems is to observe such  nonholonomic fluids in nature. The discussed nonholonomic fluid dynamics does not violate any physics laws and should be possible to realize either in experiments or as an effective theory. For example, the odd viscosity can be realized using time-modulated drive, see  \cite{PhysRevE.101.052606}.


\section{Flows tangent to nonholonomic distributions}

\subsection{Nonholonomic Moser's theorem}\label{sec:Moser}

One of  very suggestive areas of applications of infinite-dimensional nonholonomic dynamics might be 
related to the following nonholonomic version of the classical Moser theorem. Consider a non-integrable distribution $\tau$ on a compact manifold $M$ (we assume no boundary here, although there is a version with boundary as well). While any rolling or skating condition is related to such a setting where we are looking for a horizontal trajectory for this distribution, now we would like to move densities by flows of diffeomorphisms, whose vector fields are subordinated to  $\tau$. 
The motivation for considering densities (or volume forms) in a space with distribution can be related to problems with many tiny rolling balls (e.g. packaging homeopathic pills). It is more convenient to consider the density dynamics of such balls, rather than look at their trajectories individually.

\begin{theorem}[\cite{khesin2009nonholonomic}]\label{thm:nonhol} 
Let $\tau$ be a bracket-generating distribution, and $\mu_0$ and $\mu_1$ be two volume forms on $M$ 
with the same total volume: $\int_M \mu_0=\int_M\mu_1$. Then there exists a diffeomorphism $\varphi$ of $M$ 
which is the time-one-map of the flow $\varphi_t$ of a nonautonomous vector field $V_t$ tangent 
to the distribution $\tau$ everywhere on $M$ for every $t \in [0,1]$, such that $\varphi^*\mu_1 = \mu_0$. 
\end{theorem}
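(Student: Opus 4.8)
The plan is to run the classical Moser deformation argument while forcing the deformation vector field to stay inside the distribution $\tau$; the bracket-generating hypothesis enters precisely to guarantee solvability of the constrained divergence equation that this produces.

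First I would linearly interpolate, setting $\mu_t := (1-t)\mu_0 + t\mu_1$ for $t\in[0,1]$. Since $\mu_0,\mu_1$ are volume forms inducing the same orientation and their integrals agree, each $\mu_t$ is again a volume form and $\int_M \mu_t$ is independent of $t$. Searching for the flow $\varphi_t$ of a nonautonomous field $V_t$ with $\varphi_t^*\mu_t=\mu_0$ and differentiating, the identity $\tfrac{d}{dt}\varphi_t^*\mu_t=\varphi_t^*(\partial_t\mu_t+\mathcal L_{V_t}\mu_t)$ reduces the problem to the homological equation $\mathcal L_{V_t}\mu_t = \mu_0-\mu_1$, i.e.\ to finding, at each time $t$, a field $V_t$ with
$$
\mathrm{div}_{\mu_t} V_t = f_t, \qquad f_t\mu_t := \mu_0-\mu_1,
$$
where $f_t$ has zero $\mu_t$-mean by the equal-volume hypothesis. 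The one new requirement, absent from the classical theorem, is that each $V_t$ be a section of $\tau$.

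The heart of the argument is therefore the static claim: given a volume form $\mu$ and a function $f$ with $\int_M f\mu=0$, there is a field $V\in\Gamma(\tau)$ with $\mathrm{div}_\mu V=f$. To produce it I would fix an auxiliary Riemannian metric, let $P_\tau$ denote orthogonal projection onto $\tau$, and seek $V=P_\tau\nabla h$ for an unknown function $h$. This turns the equation into $\Delta_\tau h:=\mathrm{div}_\mu(P_\tau\nabla h)=f$, where $\Delta_\tau$ is a sub-Laplacian whose principal part is a sum of squares of vector fields spanning $\tau$. Here bracket-generation is decisive: by H\"ormander's theorem $\Delta_\tau$ is hypoelliptic, and it is self-adjoint with respect to $\mu$ with $\int_M h\,\Delta_\tau h\,\mu=-\int_M |P_\tau\nabla h|^2\,\mu$. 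Hence $h\in\ker\Delta_\tau$ forces $dh|_\tau\equiv 0$, so $h$ is constant along every horizontal path; by Chow--Rashevskii (again using bracket-generation) $M$ is a single horizontal orbit, so $h$ is constant. Thus $\ker\Delta_\tau$ consists of constants, and subelliptic estimates give closed range, so the Fredholm alternative yields $\mathrm{im}\,\Delta_\tau=\{f:\int_M f\mu=0\}$, solving the static problem (with $h$, hence $V$, smooth by hypoellipticity).

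Finally I would assemble the pieces. Applying the static result with $\mu=\mu_t$ gives $V_t=P_\tau\nabla h_t\in\Gamma(\tau)$ solving $\mathrm{div}_{\mu_t}V_t=f_t$; since $\Delta_\tau$ depends smoothly on $t$ (only through $\mu_t$), the solution operator does too, so $h_t$ and hence $V_t$ may be chosen smoothly in $t$. As $M$ is compact, the flow $\varphi_t$ of $V_t$ exists on all of $[0,1]$, and the Moser computation gives $\varphi_t^*\mu_t\equiv\mu_0$, so $\varphi:=\varphi_1$ satisfies $\varphi^*\mu_1=\mu_0$ with $V_t$ tangent to $\tau$ throughout. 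I expect the only genuine difficulty to be the analytic core --- establishing solvability and smooth $t$-dependence for the sub-elliptic equation $\Delta_\tau h=f$ --- while the Moser deformation and the reduction to it are routine.
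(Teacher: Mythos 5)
Your proposal is correct and takes essentially the same route as the paper: the paper states this theorem as a citation to \cite{khesin2009nonholonomic}, and the remark immediately following it sketches exactly your argument --- Moser's deformation trick reduced to a nonholonomic Hodge decomposition in which the gradient part is replaced by the projection $P^\tau \nabla f$ onto $\tau$, so that everything rests on solving the hypoelliptic sub-Laplacian equation ${\rm div}_\mu (P^\tau \nabla f)=g$ for zero-mean right-hand sides, with hypoellipticity supplied by the bracket-generating (H\"ormander) condition. Your additional details (kernel of the sub-Laplacian consists of constants via Chow--Rashevskii, Fredholm alternative via subelliptic estimates, smooth $t$-dependence) fill in that sketch correctly.
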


Thus the existence of the ``nonholonomic isotopy'' $\varphi_t$ is guaranteed by the only condition on equality of total volumes for $\mu_0$ and $\mu_1$, just like in the classical case of Moser's theorem without constraints.

\begin{remark}
One of most common proofs of Moser's theorem is based on  the classical Helmholtz-Hodge 
decomposition: any vector field $W$ on a Riemannian manifold $M$ can be uniquely decomposed into the sum
$W =  V +  U$ of $L^2$-orthogonal terms, where $  V= \nabla f$ and ${\rm div}_\mu  U = 0$. Indeed, one can move the density in a required way by using only the gradient part (and one obtains an elliptic equation on its potential). It turns out, there is the
 nonholonomic Hodge decomposition of vector fields on a manifold
with a bracket-generating distribution $\tau$, 
where gradient part $V=\nabla f$ is replaced by the projection $\bar V=P^\tau\nabla f$
of gradients  to the distribution $\tau$ (thus obtaining a hypoelliptic equation with sub-Laplacian ${\rm div}_\mu (P^\tau \nabla f)$ on the corresponding potential $f$), see \cite{khesin2009nonholonomic}.
\end{remark}

In order to describe how it is related to infinite-dimensional geometry, we recall the standard setting of optimal control. Let ${\rm Diff}(M)$ be the group 
of all (orientation-preserving) diffeomorphisms of a manifold $M$. Its Lie algebra ${\rm Vect}(M)$
 consists of all smooth vector fields on $M$. 
Fix a volume form $\mu$ of total volume 1 on $M$. Denote by
${\rm Diff}_\mu(M)$ the subgroup of volume-preserving diffeomorphisms, i.e., the diffeomorphisms
preserving the volume form $\mu$. The corresponding Lie algebra ${\rm Vect}_\mu(M)$ is
the space of divergence free vector fields.

Let $\mathcal W$ be the set of all smooth normalized volume forms in $M$, which is
called the (smooth) Wasserstein space. Consider the projection map $\pi: {\rm Diff}(M)\to \mathcal W$
defined by the pushforward of the fixed volume form $\mu$ by the
diffeomorphism $\varphi$, i.e., $\pi(\varphi)=\varphi_*\mu$. The projection  $\pi: {\rm Diff}(M)\to \mathcal W$ defines
a natural structure of a principal bundle on ${\rm Diff}(M)$ whose structure group is the
subgroup ${\rm Diff}_\mu(M)$ of volume-preserving diffeomorphisms  and fibers $F$ are
right cosets for this subgroup in ${\rm Diff}(M)$. Two diffeomorphisms $\varphi$ and $\tilde \varphi$ lie in
the same fiber if they differ by a composition (on the right) with a volume-preserving
diffeomorphism: $\tilde \varphi = \varphi\circ s, \, s\in {\rm Diff}_\mu(M)$.
On the group ${\rm Diff} (M)$ we define two vector bundles $ver$ and $hor$ whose spaces
at any diffeomorphism $\varphi\in {\rm Diff}(M)$ consist of right translated to $\varphi$ divergence-free fields
and gradient fields
respectively. Note that the bundle $ver$ is defined by the fixed volume form $\mu$, while $hor$ requires a Riemannian metric. Here the bundle $ver$ of translated divergence-free fields is the
bundle of vertical spaces $T_\varphi F$ for the fibration $\pi: {\rm Diff}(M)\to \mathcal W$, while the bundle $hor$
defines a horizontal distribution for this fibration $\pi$.

\begin{remark}
In these terms, the classical Moser theorem  can be thought of as the
existence of path-lifting property for the principal bundle $\pi: {\rm Diff}(M)\to \mathcal W$: any
deformation of volume forms can be traced by the corresponding flow, i.e.,
a path on the diffeomorphism group, projected to the deformation of forms. 
\end{remark}

\medskip

Now let $\tau$ be a bracket-generating distribution on the manifold $M$. Consider the
right-invariant distribution $\mathcal T$ on the diffeomorphism group ${\rm Diff}(M)$  defined at the
identity $id \in {\rm Diff}(M)$ of the group by the subspace  ${\mathcal T}_{id}\subset {\rm Vect}(M) $ of all those vector fields
which are tangent to the distribution $\tau$ everywhere on $M$:
$$
{\mathcal T}_\varphi = \{V \circ \varphi~|~ V (x)\in \tau_x \,{\text{ for all }}\,\,  x \in M\}.
$$

\begin{theorem}\label{thm:path}
    The infinite-dimensional distribution $\mathcal T$ is a nonintegrable
distribution in $ {\rm Diff}(M)$. Horizontal paths in this distribution are flows of
nonautonomous vector fields tangent to the distribution $\tau$  on manifold $M$.
The projection map $\pi: {\rm Diff}(M)\to \mathcal W$  in the presence
of the distribution $\mathcal T$ on ${\rm Diff}(M)$ admits the path-lifting property.
\end{theorem}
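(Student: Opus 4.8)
The plan is to treat the three assertions in order; the first two are formal consequences of right-invariance, while the analytic content is concentrated in the path-lifting property.

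Since $\mathcal{T}$ is right-invariant, its involutivity is controlled entirely by the bracket behaviour at the identity: the distribution is integrable precisely when the subspace $\mathcal{T}_{id}\subset{\rm Vect}(M)$ of $\tau$-tangent fields is closed under the Lie bracket of vector fields. I would argue that bracket-generation forces the opposite. If $\mathcal{T}_{id}$ were a subalgebra, then every iterated bracket of sections of $\tau$ would again be tangent to $\tau$, so sections of $\tau$ could generate at most $\tau$ itself; but $\tau$ is a proper bracket-generating distribution, whose iterated brackets span all of $TM\neq\tau$, a contradiction. Hence $\mathcal{T}_{id}$ is not a subalgebra and $\mathcal{T}$ is nonintegrable. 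For the second assertion I would write the velocity of a path $\varphi_t$ as $\dot\varphi_t=V_t\circ\varphi_t$, where $V_t:=\dot\varphi_t\circ\varphi_t^{-1}$ is its nonautonomous generating field; by the very definition of $\mathcal{T}_{\varphi_t}$, the horizontality condition $\dot\varphi_t\in\mathcal{T}_{\varphi_t}$ is equivalent to $V_t(x)\in\tau_x$ for all $x\in M$. Thus horizontal paths are exactly the flows of time-dependent vector fields tangent to $\tau$.

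For the path-lifting property, I would first fix a Riemannian metric $g$ on $M$ (needed to define gradients and the fibrewise orthogonal projection $P^\tau$ onto $\tau$), let $\mu_t$, $t\in[0,1]$, be a path in $\mathcal{W}$, and choose $\varphi_0\in\pi^{-1}(\mu_0)$. I want a horizontal $\varphi_t$ with $\pi(\varphi_t)=(\varphi_t)_*\mu=\mu_t$. Differentiating this identity and using the transport relation for a flow generated by $V_t$ gives the continuity equation $\partial_t\mu_t+\mathcal{L}_{V_t}\mu_t=0$. Writing $\mu_t=\rho_t\,\mu$ with $\rho_t>0$ and $\int_M\rho_t\,\mu=1$, and using $\mathcal{L}_{V_t}(\rho_t\mu)={\rm div}_\mu(\rho_t V_t)\,\mu$, this reduces to the scalar equation ${\rm div}_\mu(\rho_t V_t)=-\dot\rho_t$. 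By the previous paragraph it suffices to produce, for each $t$, a field $V_t$ tangent to $\tau$ solving this equation: its flow is then automatically horizontal and projects onto $\mu_t$.

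To solve it within $\tau$ I would set $\xi_t:=\rho_t V_t$ and seek $\xi_t$ in the nonholonomic gradient form $\xi_t=P^\tau\nabla g_t$, so that the equation becomes $\Delta_\tau g_t=-\dot\rho_t$ for the sub-Laplacian $\Delta_\tau:={\rm div}_\mu\circ P^\tau\nabla$. Since $\int_M\rho_t\,\mu\equiv 1$, the right-hand side $-\dot\rho_t$ has zero $\mu$-mean, which is exactly the solvability condition: $\Delta_\tau$ is self-adjoint with respect to $\mu$, and its kernel consists only of the constants (if $P^\tau\nabla g=0$ then $g$ is constant along every horizontal curve, hence globally constant by the Chow--Rashevskii connectivity that bracket-generation provides), so its cokernel is the constants as well. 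Setting $V_t:=\xi_t/\rho_t=(1/\rho_t)\,P^\tau\nabla g_t$ keeps $V_t(x)\in\tau_x$ (division by the positive scalar $\rho_t$ preserves the subspace $\tau_x$) and solves the continuity equation; integrating the nonautonomous field $V_t$ yields the desired horizontal lift $\varphi_t$.

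I expect the main obstacle to be exactly this last solvability step, namely establishing existence, smoothness, and smooth $t$-dependence of solutions of the hypoelliptic sub-Laplace equation $\Delta_\tau g_t=-\dot\rho_t$. This is where bracket-generation of $\tau$ is used in its strongest form: it is precisely H\"ormander's condition, which makes $\Delta_\tau$ hypoelliptic and supplies the smooth inverse on zero-mean functions. This step is the nonholonomic replacement for the elliptic Poisson equation of the classical Moser argument, and it is the content of the nonholonomic Hodge decomposition recalled after Theorem~\ref{thm:nonhol}; once it is in hand, the lift is obtained by the Moser-type construction above.
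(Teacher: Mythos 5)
Your proposal is correct, and its first two parts are essentially the paper's own argument: right-invariance reduces nonintegrability to the bracket behaviour at the identity (the paper exhibits two $\tau$-tangent fields $V,W$ whose commutator fails to be tangent to $\tau$ somewhere; your contradiction via bracket-generation is an equivalent route, valid because $\tau\neq TM$), and in both cases the second assertion is read off directly from the definition of $\mathcal{T}$. One caution: your phrase ``integrable precisely when $\mathcal{T}_{id}$ is closed under the bracket'' should be weakened to the implication (integrable $\Rightarrow$ involutive), since a Frobenius-type converse is not available in this infinite-dimensional setting --- but that implication is the only direction your argument (and the paper's) actually uses.

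Where you genuinely diverge is the path-lifting property. The paper disposes of it in one line, as a restatement of the nonholonomic Moser theorem (Theorem~\ref{thm:nonhol}), cited as a black box. You instead unpack the Moser-type construction: differentiate $(\varphi_t)_*\mu=\mu_t$ to obtain the continuity equation ${\rm div}_\mu(\rho_t V_t)=-\dot\rho_t$, solve it inside $\tau$ via the ansatz $\rho_t V_t=P^\tau\nabla g_t$, and appeal to solvability of the hypoelliptic equation $\Delta_\tau g_t=-\dot\rho_t$ on zero-mean data. This is precisely the proof of Theorem~\ref{thm:nonhol} given in \cite{khesin2009nonholonomic}, i.e.\ the nonholonomic Hodge decomposition that the paper only recalls in a remark, so the underlying analysis is identical; what your route buys is self-containedness, and it also produces the lift of an \emph{entire curve} $\{\mu_t\}$ directly --- strictly speaking, Theorem~\ref{thm:nonhol} as stated is a two-endpoint (time-one map) statement, and it is its proof, i.e.\ your construction, rather than its statement, that yields the path-lifting property. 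The step you flag but leave to the cited theory --- existence, smoothness, and smooth $t$-dependence of solutions of $\Delta_\tau g_t=-\dot\rho_t$ under H\"ormander's condition, including the Fredholm-type cokernel argument --- is exactly the analytic input hidden in the paper's citation, so your proof is complete to the same standard as the paper's.
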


\begin{proof}
To see that this  distribution $\mathcal T$ is nonintegrable we consider two horizontal
vector fields $V$ and $W$ on $M$ and the corresponding right-invariant
vector fields $\widetilde V$ and $\widetilde W$ on ${\rm Diff}(M)$. Then their bracket 
at the identity of the group is (minus) their commutator as vector fields $V$ and $W$ in $M$. This commutator
does not belong to the subspace ${\mathcal T}_{id}$, since the distribution $\tau$ is nonintegrable,
and hence at least somewhere on $M$ the commutator of horizontal fields $V$
and $W$ is not horizontal. The second statement immediately follows from the definition of the distribution ${\mathcal T}$.

The path-lifting property for the projection map $\pi: {\rm Diff}(M)\to \mathcal W$  in the presence
of the distribution $\mathcal T$ on ${\rm Diff}(M)$ is 
a restatement of the nonholonomic Moser theorem.  Namely, for a curve
$\{\mu_t~|~\mu_0 =\mu\}$ in the space $\mathcal W$ of smooth densities Theorem~\ref{thm:nonhol} proves that
there is a curve $\{\varphi_t~|~\varphi_0 = id\}$ in  ${\rm Diff}(M)$, everywhere tangent to the distribution
$\mathcal T$ and projecting to $\{\mu_t\}: \pi(\varphi_t) = \mu_t$.    
\end{proof}

Subriemannian 
geodesics  (which are vakonomic systems with purely kinetic Lagrangians) in the group ${\rm Diff}(M)$ subordinated to the infinite-dimensional distribution  $\mathcal T$
are discussed in \cite{khesin2009nonholonomic, agrachev2009optimal}. Of particular importance are horizontal geodesics, which are allowing fastest moves of densities, while their flows are tangent to a given distribution $\tau$ on $M$. 
More on subriemannian structures on groups of diffeomorphisms,  examples of normal and  abnormal geodesics in that infinite-dimensional context, and a subriemannian version of the Euler--Arnold equation can be found in 
\cite{Arguillre2014}.

Here are two examples of possible applications of the above theory. 
\medskip

\subsection{Example: Transmission flows  in the visual cortex}
\label{sec:visualcortex}

It is now widely accepted, possibly after remarkable paper \cite{hoffman1989visual}, 
that the visual cortex can be regarded as a contact bundle. Indeed, 
the sensory cortex of the
brain is arranged in a structure that is simultaneously ``topographic'' (a pointwise
mapping), layered, and columnar. The microcolumns in the columnar structure
exhibit both a directional and an areal response in addition to the pointwise one. 
These directional-areal response fields are contact elements over the visual
manifold, the ``base'', that generate visual contours as the ``lifts'' of the form stimuli from the
retina into a contact bundle embodied in the visual cortex itself.

In other words, neurons are sensitive not only to the position of an observed object, but also to the
direction of its contour on the retina surface. Thus a rough approximation of the {\it visual cortex}  can regard it {\it as a space of contact elements}.
The latter space is 3-dimensional: 2 dimensions for the position on the retina surface, and one for the observed direction, an element of $S^1$. It has a natural contact structure, given by the skate condition: the contact planes are spanned by the two fields, namely, by the field rotating the direction of the contact element about its tangency point and by the field moving the point of contact along the element direction, see e.g. \cite{arnold1989mathematical}.

The signal in the cortex 
is transmitted in the fastest way along the horizontal curves for this 2D contact distribution. 
Hence the usefulness of the (finite-dimensional) contact geometry in neuroscience.

Now notice that in order to transmit not just separate points but a whole visual picture, it is best to describe 
the {\it evolution of the signal density} along this contact distribution. This is exactly the setting of the nonholonomic Moser theorem and nonholonomic optimal transport \cite{khesin2009nonholonomic, agrachev2009optimal}: one is looking for a (possibly faster) way to transport a density of signals by diffeomorphisms whose flow is tangent to the contact distribution of the visual cortex. Furthermore, by adding colors, brightness,  and other parameters to the visual signal one can set a similar transport problem for nonholonomic distributions in higher dimensions.

\begin{remark}
    Theorems \ref{thm:nonhol} and \ref{thm:path} allow one to generalize  the dynamics of the signal density
    in the visual cortex from contact to arbitrary bracket-generating distributions. The latter might need more commutators to generate the whole tanget space at certain regions and hence have a slower pace of transmitting the signal. This could be particularly important for processing images in such eye regions as scotomas, and in particular in the optic disc, the spot where the optic nerve is exiting the retina: apparently the nonintegrable distributions in the areas of visual cortex corresponding to neurons in scotomas have higher degrees of nonholonomic degeneracy.  
\end{remark}
\medskip

\subsection{Example: Potential flows for the Burgers equation}
\label{sec:burgers}

Return to the setting of the diffeomorphism group ${\rm Diff}(M)$ fibered over the space of densities
${\mathcal{W}}={\rm Dens}(M)$ by the projection $\pi: {\rm Diff}(M)\to \mathcal W$.
On the density space $\mathcal{W}$  there  exists a metric 
inspired by the following  {\it optimal mass
transport problem}: find a (sufficiently regular) map $\eta:M\to M$ that pushes
the  measure $\mu$ forward to $\nu$ and attains the minimum
of the $L^2$-cost functional 
$\int_M \operatorname{dist}^2(x,\eta(x))\mu$ among all such maps, where $\operatorname{dist}$ is the Riemannian distance on $M$.
The minimal cost of transport defines the  
{\it Wasserstein $L^2$-distance} ${\operatorname{Dist}}$ on densities $\operatorname{Dens}(M)$:
$$
{\operatorname{Dist}}^2(\mu, \nu)
:=\inf_\eta\Big\{\int_M \operatorname{dist}^2(x,\eta(x))\mu~
|~\eta_*\mu=\nu\Big\}\,.
$$
This Wasserstein distance function is  
generated by a (weak) Riemannian metric on the space $\mathcal{W}$ of smooth densities.
One can see that, due to the Hodge decomposition,  the most effective way of moving density is by gradient vector fields. 

\medskip

It turns out that there also exists a natural $L^2$ metric on the group ${\rm Diff}(M)$, see \cite{otto2001geometry}. 
Its geodesics are given by solutions to the (inviscid) {\it Burgers equation} $\partial_t u+\nabla_u u=0$
for a vector field $u$ on $M$, where $\nabla_u u$ stands for the covariant derivative on $M$. Solutions of the Burgers equation are time-dependent vector fields on $M$
that describe the following flows of fluid particles: 
each particle moves with constant velocity (defined by the initial condition) 
along a geodesic in $M$.

Geodesics on the density space $\mathcal{W}$, particularly important for optimal transport,  can be obtained from horizontal geodesics on the group ${\rm Diff}(M)$. Horizontal geodesics in  ${\rm Diff}(M)$ correspond to potential solutions of the Burgers equation: their initial conditions are given by {\it gradient fields}: 
$u_0=\grad f_0$. It turns out that then such geodesics remain potential for all times, and the evolution of their potentials is described by the Hamilton-Jacobi equation 
$$
\partial_t f_t+ (\nabla f_t, \nabla f_t)=0\,,
$$ 
see e.g. 
\cite{otto2001geometry, khesin2021geometric}.
\medskip

Here we again observe the phenomenon that the subriemannian geodesics  for the {\it infinite-dimensional nonintegrable horizontal distribution} on the group ${\rm Diff}(M)$ given by right translations of gradient fields on $M$
coincide with Riemannian $L^2$-geodesics  on ${\rm Diff}(M)$ with potential initial conditions. Namely, a Riemannian geodesic that started being tangent to the distribution $hor$, i.e., as a potential Burgers solution,
remains tangent to it for all times, and hence it coincides with a subriemannian geodesic for the same initial condition.

\medskip


\section{Subriemannian approximations of fluid dynamics}
\label{sec:fluidapprox}

There is a beautiful application of subriemannian geodesics in modelling of fluid flows.
The hydrodynamical Euler equations define an infinite-dimensional Hamiltonian system as the Euler--Arnold equation on the group 
${\rm Diff}_\mu(M)$ of volume-preserving diffeomorphisms, cf. Section~\ref{sect:Euler--Arnold}.
Recall that the Euler equations describe the following evolution of the fluid velocity field $v=v(x,t)$:
$$
\partial_t v +\nabla_v v=-\nabla p, \qquad {\rm div}\,v=0\,,
$$
where the pressure function $p$ is defined uniquely modulo an additive constant, $v\in \mathfrak g={\rm Vect}_\mu(M)$ is divergence-free with respect to the volume form $\mu$, and $\nabla_v$ is the covariant derivative with respect to a metric on $M$. 
The Hamiltonian formulation of this equation on the dual space $\mathfrak g^*$ describes 
an evolution of 1-form $v^\flat$ metric-dual to $v$:
$$
\partial_t v^\flat +L_v v^\flat=-d\tilde p\,.
$$
In \cite{Sina, Chern2024} it was proposed to approximate this dynamics by using a {\it non-integrable finite-dimensional distribution} on the same infinite-dimensional group ${\rm Diff}_\mu(M)$. 
(Compare this with the non-integrable  {\it infinite-dimensional distribution} $\mathcal T$ on the diffeomorphism group ${\rm Diff}(M)$ in Section \ref{sec:Moser}.)

Namely, fix a finite-dimensional subspace $\ell\subset {\rm Vect}_\mu(M)$, 
which is not a Lie subalgebra, in the the space of divergence-free vector fields $ {\rm Vect}_\mu(M)$ in a manifold $M$. 
(One may consider, e.g., $\ell$ spanned by a finite number of Fourier harmonics or by vector fields supported near vertices of the regular lattice in $\mathbb R^d$, convenient for computations.) The corresponding right-invariant distribution $\mathcal L$  on the group ${\rm Diff}_\mu(M)$ 
obtained by translating $\ell=\mathcal L_{id}$  to any element of the group is a non-integrable finite-dimensional distribution.
The standard right-invariant $L^2$-metric restricted to this distribution equips $({\rm Diff}(M), \mathcal L)$ with a subriemannian metric.
 
The corresponding subriemannian geodesics (or equivalently, vakonomic trajectories) are described by means of the Hamiltonian flow on the corresponding  dual Lie algebra $\mathfrak g^*= {\rm Vect}^*_\mu(M)$ for the new degenerate quadratic Hamiltonian function $\tilde H$, dual to the $L^2$-inner product on $\ell$, see \cite{Sina}:
$$
\partial_t u +L_v u=-d\tilde p\,,
$$
where  $v\in \ell\subset {\rm Vect}_\mu(M)$, while $u=v^\flat+\zeta$ with $\tilde H(\zeta)=0$, i.e. $u$
is a 1-form differing from $v^\flat$ by an element of the annihilator of $\ell$.
Since this equation is still Hamiltonian on $\mathfrak g^*= {\rm Vect}^*_\mu(M)$ with an adjusted Hamiltonian, it has the same symmetries and Casimirs as the original Euler equation. In particular, the vorticity field is still frozen into the flow, as in the original Kelvin law.
\smallskip

This can be compared with the Lagrange--d'Alembert principle for the same nonholonomic constraints:
$$
\partial_t u^\flat +P(L_v u^\flat) =-d\tilde p\,,
$$
where  $v\in \ell\subset {\rm Vect}_\mu(M)$, while $P$ is a projection on $\mathfrak g^*$ of the coadjoint action operator $L_v$ 
to a certain dual to the subspace $\ell$, see \cite{Sina}.  Unlike the subriemannian setting above, this projection might not preserve the coadjoint orbits, although the dynamics preserves the kinetic energy.

\begin{remark}
One can also approximate the infinite-dimensional group of volume-preserving diffeomorphisms using 
a finite-dimensional Lie group, where associated discrete Euler equations are derived either from 
a variational principle or from the Lagrange--d'Alembert principle with nonholonomic constraints. 
In particular, in \cite{marsden2009structure} there was proposed an approach utilizing an Eulerian, 
finite-dimensional representation of volume-preserving diffeomorphisms that encodes the displacement of a fluid from its initial configuration using special orthogonal signed stochastic matrices. From this particular discretization of the configuration space,  regarded as (a subset of) a finite-dimensional Lie group, one can derive a right-invariant discrete equivalent to the Eulerian velocity through its Lie algebra, i.e., through antisymmetric matrices whose columns sum to zero. 
By  imposing  nonholonomic constraints on the velocity field to allow transfer only between neighbouring cells during each time update the authors of \cite{marsden2009structure} apply the Lagrange--d'Alembert principle to obtain the discrete equations of motion for their fluid representation. The resulting Eulerian Lie-group integrator is structure-preserving, it also manifests good long-term energy behavior and  numerical properties in that approximation.
\end{remark}


\section{Cars with many trailers and snake motions}

\subsection{The \texorpdfstring{$n$}{n}-trailer systems and Goursat distributions}
\label{sec:ntrailers}

The $n$-trailer system in control theory is a  kinematical model for a (vertical) unicycle towing $n$ trailers, see e.g. \cite{Jean1996, PasillasLepineRespondek2001, MONTGOMERY2001459}. In this model the tow hook of each trailer is located at the center of its unique axle, and for simplicity one often assumes that  the distances between any two consecutive trailers are equal. The configuration space of such a system is $\mathcal M^{n+3}=\mathbb R^2\times (S^1)^{n+1}$ equipped with a two-dimensional distribution spanned by the admissible infinitesimal 
motions of the unicycle (or, equivalently, of the skate discussed in Section \ref{sect:skate}).
One can define this distribution inductively: first consider the pair of vector fields $(\tau^0_1, \tau^0_2)$ on $\mathbb R^2\times S^1$ describing  the kinematics of the unicycle towing no trailers: 
$$
\tau^0_1=\frac{\partial}{\partial \theta_0}\quad \text{ and }\quad \tau^0_2=\cos\theta_0 \frac{\partial}{\partial x}+\sin \theta_0 \frac{\partial}{\partial y}\,.
$$
Now the {\it $n$-trailer system} is defined by adding one trailer at a time, which correspond to a sequence of prolongations.
Namely, suppose that a pair of vector fields $\tau^{n-1}:=(\tau^{n-1}_1, \tau^{n-1}_2) $ on
$\mathbb R^2\times (S^1)^{n}$ corresponding to a unicycle with $(n-1)$ trailers was defined. Then the next pair 
of vector fields $\tau^{n}:=(\tau^{n}_1, \tau^{n}_2) $ on
$\mathbb R^2\times (S^1)^{n+1}$ is given by
$$
\tau^n_1=\frac{\partial}{\partial \theta_n}\quad \text{ and }\quad \tau^n_2=\sin(\theta_n-\theta_{n-1})\tau^{n-1}_1+
\cos (\theta_n-\theta_{n-1}) \tau^{n-1}_2\,.
$$
Here $x,y$ give position of the last trailer on $\mathbb R^2$ and $\theta_0,..., \theta_n$ stand for the angles between trailer's axle (starting with the last one) and the $x$-axis.

This 2D distribution is a canonical example of the Goursat distribution, where successive commutator brackets of the vector fields belonging to the derived distribution grow by 1 dimension at a time. Here is the formal definition, see e.g. 
\cite{montgomery2002tour}.

\begin{definition}\label{def:Goursat}
A {\it Goursat distribution} on a manifold $M$ of dimension $n\ge 3$ is a  two-dimensional distribution $\mathcal D$ such
that, for $0\le i\le n-2$, the elements of its derived flag satisfy $\dim \mathcal D^{(i)}(p) = i+2$, for each point $p \in M$.
Recall that the {\it derived flag} of a distribution $\mathcal D$ is the sequence $\mathcal D^{(0)}\subset \mathcal D^{(1)}\subset \dots$ defined inductively for $ i\ge 0$ by 
$$
\mathcal D^{(0)}: = \mathcal D \quad {\rm and}\quad  \mathcal D^{(i+1) }:= \mathcal D^{(i)} + [\mathcal D^{(i)}, \mathcal D^{(i)}].
$$
\end{definition}

The classical theorem of von Weber-Cartan-Goursat claims that any Goursat distribution in $M^n$  at a generic point 
is diffeomorphic to the one spanned by the following pair of vector fields in $\mathbb R^n$:
$$
\mathcal D=(\frac{\partial}{\partial x_n}, \, x_n\frac{\partial}{\partial x_{n-1}}+x_{n-1}\frac{\partial}{\partial x_{n-2}}+\dots
+x_3\frac{\partial}{\partial x_{2}}+\frac{\partial}{\partial x_{1}})\,.
$$
The trailer system $\tau^{n-3}=(\tau^{n-3}_1, \tau^{n-3}_2) $ with $n$-dimensional configuration space in a neighborhood of a typical point can be reduced to this Goursat normal form $\mathcal D$ in $\mathbb R^n$, see e.g. \cite{PasillasLepineRespondek2001}.
\medskip

Another important example of a two-dimensional Goursat distribution is the classical Cartan distribution 
on the $(s+2)$-dimensional space of $s$-jets of functions $f(x)$ in one variable. This distribution can be described by zeros of $s$ differential 1-forms 
$$
\alpha_1 =dy-z_1\,dx, \, \alpha_2=dz_1-z_2\,dx,\, \dots, \alpha_s =dz_{s-1}-z_s\,dx
$$
in the space $(x,y,z_1,\dots, z_s)$,
where $y$ represents the value of $f$ at $x$ and $z_i$ represents the value at $x$ of the $i$th derivative of $f$. 
Then the $s$-jet of any function $y=f(x)$ is tangent to the two-dimensional distribution defined by the intersection 
of zero hyperplanes of the  1-forms $\alpha_1,\dots, \alpha_s$. 


\subsection{Parking a car and a dimensional shift}
\label{sec:parkingcar}

As a side note, it is worth mentioning that the description of a car with trailers, unlike the common perception, bumps up
the dimensions in this problem. Namely,  {\it a car} without a trailer corresponds to a {\it four-dimensional configuration space}, not to the three-dimensional configuration space of {\it a  unicycle or a skate}. Correspondingly, {\it a car with $n$ trailers} 
has  an $(n+4)$-dimensional  domain $\mathcal C^{n+4}\subset \mathcal M^{n+4}=\mathbb R^2\times (S^1)^{n+2}$
as a natural configuration space.

\begin{figure}[ht!]
\begin{center}
\includegraphics[width=0.5\textwidth]{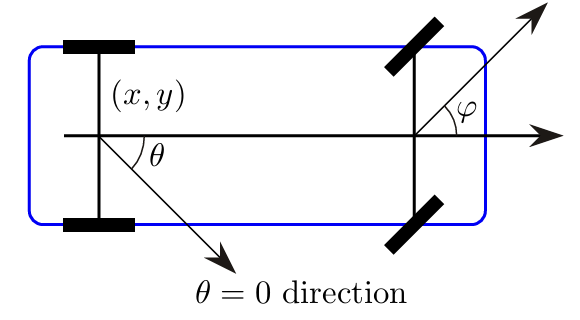}
\end{center}
\caption{The car position is described by  its midpoint $(x,y)$ of the axle, the angle $\theta$ of the car axle with a fixed direction, and the steering angle $\varphi$ of the front wheels, see \cite{michor2008topics}.}
\label{fig:car}
\end{figure}

Indeed, for a car in the plane the configuration space consists of all quadruples 
$(x,y, \theta, \varphi)\in \mathbb R^2\times S^1\times I=:\mathcal C^4$, where $(x,y)$ is the position of the midpoint 
of the rear axle, $\theta$ is the direction of the car axle, and $\varphi$ is the steering angle of the front wheels 
with the range within some interval $I$, e.g. $I=(-\pi/4, \pi/4)$, see Figure \ref{fig:car} and \cite{michor2008topics}. 
We emphasize that the car's configuration space is four-dimensional, as one actually needs four parameters to describe
the corresponding two control vector fields: 
$$
{\rm steer} := \frac{\partial}{\partial\varphi} \, {\text{ and }}\, {\rm drive} := \cos \theta \frac{\partial}{\partial x}
 +\sin\theta  \frac{\partial}{\partial y} +(1/l )\tan \varphi \frac{\partial}{\partial \theta}\,,
 $$
 which together span the distribution $\mathcal D=({\rm steer}, {\rm drive})$. (Here $l$ is the span between the front and rear axles.)
The fields obtained by their commutators 
 $$
 {\rm turn }:=[{\rm steer, drive}] \quad {\rm and} \quad {\rm park}:=[{\rm drive, turn}]
 $$ 
 span  the corresponding distributions of the derived flag: $\mathcal D^{(1)}=({\rm steer}, {\rm drive}, {\rm turn})$ and 
  $\mathcal D^{(2)}=({\rm steer}, {\rm drive}, {\rm turn}, {\rm park})$. 
  Explicitly, one obtains the fields
 $$
 {\rm turn}=h(\varphi)\frac{\partial}{\partial \theta} \, {\text{ and }}\,  
 {\rm park}=h(\varphi)(\sin \theta \frac{\partial}{\partial x}-\cos\theta  \frac{\partial}{\partial y} )
 $$
 for $h(\varphi):=1/(l\cos^2\varphi)$. Note that the field ``turn'' is collinear with the rotation field of the unicycle, while 
 the ``park'' vector field moves the car orthogonally to its axis, i.e., provides the parallel parking.
 
The fact that the  
 whole tangent space  of the configuration space $\mathcal C^4$ is spanned at each point (i.e., the distribution 
 is bracket-generating) implies  that every point of $\mathcal C^4$ is attainable. Its common-sense corollary is that, 
 in principle, one can park a car at any point and  in any direction in the plane. 
 The above consideration can be summarized in the following statement.

\begin{proposition}
The two car control vector fields ${\rm steer}$ and ${\rm drive}$ span the Engel distribution, i.e., a generic Goursat two-dimensional distribution in a four-dimensional space $\mathcal C^4$. 
\end{proposition}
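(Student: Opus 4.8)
The plan is to verify directly the Goursat dimension conditions of Definition~\ref{def:Goursat} in the case $n=4$: I must show that the derived flag $\mathcal D^{(0)}\subset\mathcal D^{(1)}\subset\mathcal D^{(2)}$ has ranks $2,3,4$ respectively, and that these rank counts hold at \emph{every} point of $\mathcal C^4$ rather than merely on an open dense set. Since the brackets ${\rm turn}=[{\rm steer},{\rm drive}]$ and ${\rm park}=[{\rm drive},{\rm turn}]$ have already been computed in the text, the whole argument reduces to three pointwise linear-independence checks among the coordinate directions $\partial/\partial x,\partial/\partial y,\partial/\partial\theta,\partial/\partial\varphi$.

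First I would note that ${\rm steer}=\partial/\partial\varphi$ and ${\rm drive}$ are independent everywhere: ${\rm drive}$ has no $\partial/\partial\varphi$-component and its horizontal part $\cos\theta\,\partial/\partial x+\sin\theta\,\partial/\partial y$ is a nonzero vector in the $(x,y)$-plane, so $\dim\mathcal D^{(0)}=2$. Next, from ${\rm turn}=h(\varphi)\,\partial/\partial\theta$ with $h(\varphi)=1/(l\cos^2\varphi)$, the field ${\rm turn}$ points purely along $\partial/\partial\theta$ and, together with ${\rm steer}$ (along $\partial/\partial\varphi$) and the surviving horizontal part of ${\rm drive}$, yields three independent directions, so $\dim\mathcal D^{(1)}=3$. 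Finally, ${\rm park}=h(\varphi)(\sin\theta\,\partial/\partial x-\cos\theta\,\partial/\partial y)$ carries the horizontal vector $(\sin\theta,-\cos\theta)$, which is orthogonal to, hence independent of, the horizontal vector $(\cos\theta,\sin\theta)$ of ${\rm drive}$; since ${\rm steer}$ and ${\rm turn}$ have no $(x,y)$-component, the four fields ${\rm steer},{\rm drive},{\rm turn},{\rm park}$ span the whole tangent space, giving $\dim\mathcal D^{(2)}=4$. This is precisely the Engel, i.e. generic Goursat, condition in dimension four.

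The one point requiring care---the would-be obstacle---is the nonvanishing of the factor $h(\varphi)$, on which the rank jumps at both the first and second steps depend. Here the restriction of the steering angle to the admissible interval $I=(-\pi/4,\pi/4)$ is essential: on $I$ one has $\cos\varphi>0$ and hence $h(\varphi)>0$, so ${\rm turn}$ and ${\rm park}$ never degenerate and the dimension counts hold \emph{uniformly} on $\mathcal C^4$, with no genericity caveat needed. Only at the geometrically meaningless configuration $\varphi=\pm\pi/2$, excluded from $I$, would $h$ blow up. With $h>0$ fixed throughout, the three independence checks complete the proof.
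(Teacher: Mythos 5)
Your proof is correct and follows essentially the same route as the paper: the paper's argument is simply ``this follows from the explicit expressions for the fields,'' i.e.\ the spanning of $\mathcal D^{(1)}=({\rm steer},{\rm drive},{\rm turn})$ and $\mathcal D^{(2)}=({\rm steer},{\rm drive},{\rm turn},{\rm park})$, which is exactly the pointwise independence check you carry out. Your additional observation that $h(\varphi)=1/(l\cos^2\varphi)$ is nonvanishing on the admissible interval $I=(-\pi/4,\pi/4)$, so the rank conditions hold at \emph{every} point of $\mathcal C^4$, is a detail the paper leaves implicit and is a welcome bit of rigor.
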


This follows from the explicit expressions for the fields given above. 
Note that the ``turn'' field $h(\varphi){\partial}/{\partial \theta}\in\mathcal D_C^{(1)}$ arises in 
the 3-dimensional commutator  of the first two fields $({\rm steer}, {\rm drive})=:\mathcal D_C$ 
in the car setting in $\mathcal C^4$, while the ``turn'' field $\tau^0_1={\partial}/{\partial \theta}$ appears already in the initial  
2-dimensional distribution $\mathcal D_M:=\tau^0=(\tau^0_1,\tau^0_2)$ in the unicycle setting in $\mathcal M^3$!

In fact, one can see that the forgetful map $\mathcal C^4\to \mathcal M^3$ sending 
$(x,y, \theta, \varphi)\mapsto (x,y, \theta)$ by forgetting the car steering angle $\varphi$ takes the distribution
$\mathcal D_C^{(1)}$ to $\mathcal D_M=\mathcal D_M^{(0)}$.

\bigskip

 Next, for a car with a trailer one has the 5-dimensional configuration space ${\mathcal C}^5$, where the angle with 
 the trailer is added to the list of coordinates, etc.  The controls are still limited to the same two vector fields ``steer'' and ``drive'', 
 which generate,  by taking their iterated commutators, the whole tangent space of 
 ${\mathcal C}^5$. Similarly, the systems of a car with $n$ trailers are all described by generic  Goursat distributions
 in the corresponding configuration spaces ${\mathcal C}^{n+4}$, as was mentioned before. Thus their dimensions are  one larger than those of ${\mathcal M}^{n+3}$ for a unicycle with $n$ trailers.  The car-trailer system along with its distribution $\mathcal D_C$ and its prolongation in ${\mathcal C}^{n+4}$ are projected  by the above map of forgetting the steering angle to the unicycle-trailer system with the Goursat distribution $\mathcal D_M$  and its prolongation in ${\mathcal M}^{n+3}$. In particular the distribution $\mathcal D_C$ is also Goursat.


\subsection{Sleighs with strings, snake motions, and infinite-dimensional Goursat}
\label{sec:snake}

Definition \ref{def:Goursat} is also valid in an infinite-dimensional setting. 

\begin{definition} 
A {\it Goursat structure on an infinite-dimensional manifold} $\mathcal M$  
is a  two-dimensional distribution $\mathcal D$ such that, for all $ i\ge 0$, the elements of its derived flag 
satisfy $\dim \mathcal D^{(i)}(p) = i+2$, for each point $p \in \mathcal M$.
\end{definition}

The main example of the latter is the infinite-dimensional jet space equipped with the Cartan distribution.
Prolongations of functions of one variable (i.e., considered with all their derivatives) are tangent to this distribution. 
\medskip

Above we looked at a car/unicycle/skate with $n$ trailers, and now we send the number of trailers  to infinity. 
A natural limiting infinite-dimensional system has the following kinematic description. At each time moment $t$ it is a smooth unstretchable string described by an embedded arc-parametrized curve $z(s):=(x,y)(s)\in \mathbb R^2$ 
of fixed length. Its evolution $z(s,t) $ is subordinated to the following nonholonomic constraint:
at each moment its time derivative, $\partial_t z$, is collinear with the curve's tangent, $\partial_s z$.
This is {\it the infinite-dimensional skate constraint}, where the motion of a skate is possible only in the current direction of the skate itself, while it cannot move across, i.e., transversally to that direction.
 This constraint implies that the image of the map $z: (s,t)\mapsto z (s,t)\in \mathbb R^2$ is one-dimensional: at any moment $t_*$ the velocities (i.e., time derivatives) of points on the curve
 $z(s, t_*)$ for any $s$ are directed along the curve $z(0, t)$ made by the motion of the curve's own head point. Since we assume that $s$ is an arc-length parameter, the time evolution reduces to the
 shift in the image parametrization, so that $z(s,t)=u(s+f(t))$. This model of  a {\it snake-type motion} is also known as the  {\it Chaplygin sleigh with a string}, cf. \cite{Zenkov}.
  
\begin{theorem}
The snake-type motion with the  infinite-dimensional skate constraint
corresponds to a curve subordinated to an  infinite-dimensional  Goursat structure, the Cartan distribution in the infinite jet space.
\end{theorem}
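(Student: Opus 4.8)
The plan is to identify the snake motion with the prolongation of its planar profile curve into the infinite jet space, and then to invoke that prolongations are precisely the integral curves of the Cartan distribution. First I would start from the reduction already extracted from the infinitesimal skate constraint, namely $z(s,t)=u(s+f(t))$, so that the whole evolution is governed by a single fixed arc-length profile $u$ together with the scalar sliding function $f(t)$. Watching the head $s=0$, its position $u(f(t))$ sweeps out the entire profile as $t$ varies, and the instantaneous shape of the trailing string is determined by the local geometry of $u$ at the head. This reduces the configuration at each moment to the $\infty$-jet of the profile curve at the current head parameter $\sigma=f(t)$.

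Next I would write the profile locally as a graph $y=f(x)$ in a chart where the tangent is non-vertical (passing to the intrinsic jet-space atlas to cover the turning points) and associate to the motion the curve
\[
\sigma \longmapsto \big(x(\sigma),\,f(x(\sigma)),\,f'(x(\sigma)),\,f''(x(\sigma)),\,\dots\big)\in J^\infty,
\]
that is, the jet $j^\infty f$ evaluated along the head trajectory. The identification to make precise here is that the jet coordinates $z_k=d^k f/dx^k$ are exactly the continuum limit of the discrete trailer data of the $n$-trailer recursion: $z_1=\tan\theta$ records the tangent angle, while the higher $z_k$ record the curvature and its successive arc-length derivatives, which is the information carried by the differences $\theta_n-\theta_{n-1}$ of the prolongations $\tau^{n}$ as the inter-trailer spacing tends to zero.

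I would then verify that this curve is subordinate to the Cartan distribution. Its tangent coincides with that of the prolongation $j^\infty f$, which is tangent to the Cartan distribution by the discussion above; concretely the contact forms $\alpha_k=dz_{k-1}-z_k\,dx$ (with $z_0=y$) annihilate it, since along $j^\infty f$ one has $dz_{k-1}=f^{(k)}(x)\,dx=z_k\,dx$. Equivalently, the tangent is a multiple of the total derivative $D=\partial_x+z_1\partial_y+z_2\partial_{z_1}+\cdots$, which is the exact jet-space translation of the skate constraint $\partial_t z\parallel\partial_s z$: sliding along the string advances the base point $x$ while updating each $z_{k-1}$ by $z_k\,dx$. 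Since the Cartan distribution on the infinite jet space is an infinite-dimensional Goursat structure, the snake motion is a curve subordinated to such a structure, as claimed.

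The step I expect to be the main obstacle is the second one: making the continuum limit rigorous and matching it cleanly to the jet coordinates. One must show that the discrete prolongation rule $\tau^n_2=\sin(\theta_n-\theta_{n-1})\tau^{n-1}_1+\cos(\theta_n-\theta_{n-1})\tau^{n-1}_2$ converges, after rescaling the spacing, to the total-derivative operator generating the Cartan distribution, and that the arc-length parametrization of $u$ can be uniformly traded for the base variable $x$ away from vertical tangents, with the jet-space charts absorbing the turning points. The vanishing of the Cartan forms on a prolongation is formal; the genuine content lies in this limit and in the bookkeeping of parametrizations.
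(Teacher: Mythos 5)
Your proof is correct and follows essentially the same route as the paper's: starting from the reduction $z(s,t)=u(s+f(t))$, the motion is recognized as motion along the prolongation of a fixed plane curve, locally the graph of a function of one variable, and such prolongations are precisely the integral curves of the Cartan distribution --- your explicit check that the contact forms $\alpha_k=dz_{k-1}-z_k\,dx$ annihilate the tangent (equivalently, that the tangent is a multiple of the total derivative) merely spells out what the paper cites as ``properties of the Cartan distribution for functions of one variable.'' The one point to correct is your assessment of the main obstacle: the theorem is a statement about the continuum system alone, so no rigorous continuum limit of the $n$-trailer recursion $\tau^n$ is required --- that identification is motivation for viewing the snake as the $n\to\infty$ limit of trailer systems, but it enters neither the paper's proof nor the self-contained core of yours, so the step you feared would fail can simply be deleted.
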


\begin{proof}
 Indeed, consider the curve $z(s, t_*)$ at any moment $t_*\in (0,1)$. At the moment $t_*+\delta t$ the curve
$z(s, t_*+\delta t)$ has the same prolongation in $s$ at the curve $z(s, t_*)$, since all its derivatives are predefined
by the one-dimensional image in $\mathbb R^2$. Hence the motion in $t$ can be regarded as the motion
along its prolongation of the fixed curve in the plane, while the prolongation must be everywhere tangent to the Cartan distribution in the infinite jet space. Furthermore, a curve in the plane can be locally regarded as
the graph of a function $\mathbb R\to \mathbb R$. 
Thus the statement  follows from one-dimensionality of the image in the plane and the properties of the Cartan distribution for functions of one variable.   
\end{proof}

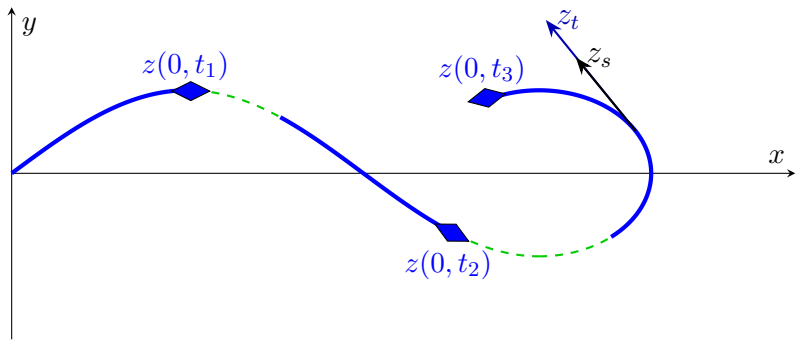
\begin{figure}[ht]
\centering

\begin{tikzpicture}
  \begin{axis}[
    axis lines=middle,
    xmin=0, xmax=7,
    ymin=-2, ymax=2,
    xtick=\empty, ytick=\empty,
    xlabel={$x$}, ylabel={$y$},
    width=12cm, height=6cm,
    samples=300,
    clip=false
  ]

    \addplot [green!80!black, dashed, thick, domain=0:4.7124]
      ({x}, {sin(deg(x))});

    \addplot [green!80!black, dashed, thick, domain=-90:120]
      ({4.7124 + cos(x)}, {sin(x)});

    \addplot [blue, ultra thick, domain=0:1.5708]
      ({x}, {sin(deg(x))});

\begin{scope}[rotate=0]
  \node[
    shape=diamond,
    fill=blue,
    draw=black,
    minimum size=6pt,
    xscale=1,
    yscale=0.5,
    transform shape
  ] at (axis cs:{1.6}, {0.99}) {};
\end{scope}
    
    \node[blue] at (axis cs:1.55, 1.3) {$z(0,t_1)$};

    \addplot [blue, ultra thick, domain=2.4:3.9708]
      ({x}, {sin(deg(x))});

\begin{scope}[rotate=-27]
  \node[
    shape=diamond,
    fill=blue,
    draw=black,
    minimum size=6pt,
    xscale=1,
    yscale=0.5,
    transform shape
  ] at (axis cs:{3.07}, {1.55}) {};
\end{scope}

    \node[blue] at (axis cs:3.9, -1.1) {$z(0,t_2)$};

    \addplot [blue, ultra thick, domain=-50:120]
      ({4.7124 + cos(x)}, {sin(x)});


\begin{scope}[rotate=12]
  \node[
    shape=diamond,
    fill=blue,
    draw=black,
    minimum size=6pt,
    xscale=1,
    yscale=0.5,
    transform shape
  ] at (axis cs:{5.1 + cos(120)}, {sin(120)-1.21}) {};
\end{scope}

\node[blue] at (axis cs:4.2, 1.25) {$z(0,t_3)$};
  \end{axis}


    \draw[->, thick, blue!70!black] (8.3, 2.77) -- (7.1, 4.25) node[right=-0pt] {\large $z_t$};
    
    \draw[->, thick, black!70!black] (8.3, 2.77) -- (7.5, 3.75) node[right=-0pt] {\large $z_s$};
\end{tikzpicture}

\caption{Illustration of the infinite-dimensional ``snake constraint'': the string evolves so that the velocity \( z_t \) of each point remains collinear to its tangent vector \( z_s \), enforcing the nonholonomic skate-like constraint.  The evolving curve slides along itself and follows the trajectory of its own head point \( z(0,t) \).
Blue segments show the shape of the snake at times \( t_1 < t_2 < t_3 \), each tangent to the common trajectory (dashed green). }
\label{fig:snake-geometry} 
\end{figure}

\begin{remark}
There is an interesting corollary\footnote{We are grateful to R.~Montgomery for this observation.} of the above consideration: 
a typical trajectory of the snake-type motion is $C^\infty$-smooth, but not analytic. Indeed, an analytic curve $z(s, t_*)$ is defined 
uniquely by its values at any neighborhood of any point $s_*$, and hence there remains no freedom in the head motion except for different time parametrization of the same trajectory. Essentially this means 
that the control system obtained by taking  a continuous limit of the many-trailer system is naturally to be defined as 
$C^\infty$-map $z(s,t)=u(s+f(t))$ in order to keep its motion flexible, rather than predetermined.
\end{remark}

The above discussion concerns the possible {\it kinematics} of the snake, where the head can follow any immersed curve without restrictions. The corresponding  {\it dynamics} of the balanced Chaplygin sleigh 
 with a string as an infinite-dimensional nonholonomic system is described in \cite{Zenkov}. 
 The balanced sleigh has the mass over the point of contact and is equivalent to the skate problem, see Section~\ref{sec:skate}. 
It was proved in \cite{Zenkov} that trajectories of the sleigh's contact point  in the presence of a  heavy string without resistance are identical to those in
the absence of the  string. The latter are known to be uniform circular  or straight line motions (where the line could be regarded as a circle of infinite radius), cf. Section~\ref{sect:skate} and see e.g. \cite{bloch2003nonholonomic}. 
Each point of the string follows the trajectory of the contact point of
the sleigh with a suitable delay. This implies that after some time interval, the inertial dynamics of this sleigh-string system is represented by periodic trajectories in the phase space, and hence demonstrates integrable behavior \cite{Zenkov}.

\begin{remark}
The “snake” constraint -- namely, enforcing that $\partial_t z$ remains parallel to $\partial_s z$ -— can be realized by either the elastic energy or frictional dissipation (or both) penalizing motion transverse to the string. In analogy with Section~\ref{sec:skate}, we expect the emergence of an interpolating function $\mu(s)$ representing the ratio of dissipation to elastic resistance at each point $s$ along the string. The Lagrange--d'Alembert and vakonomic regimes correspond to the limiting behaviors $\mu(s) \to \infty$ and $\mu(s) \to 0$, respectively.
\end{remark}



\bigskip

{\bf Acknowledgments.} 
We are indebted to Anthony Bloch, Richard Montgomery, and Sina Nabizadeh
for fruitful discussions. The research of BK was partially supported by an NSERC Discovery Grant. 
The research of AGA was supported by the National Science Foundation under Grant NSF DMR–2116767.

\newpage
\small

\bibliographystyle{alpha}
\bibliography{vakonomics}

\end{document}